\newtheorem{theorem}{Theorem}[section]
\newtheorem{proposition}[theorem]{Proposition}
\newtheorem{corollary}[theorem]{Corollary}
\newtheorem{lemma}[theorem]{Lemma}
\newtheorem{question}[theorem]{Question}
\theoremstyle{definition}
\newtheorem{example}[theorem]{Example}
\newtheorem{construction}[theorem]{Construction}
\theoremstyle{remark}
\newtheorem*{remark}{Remark}
\newtheorem*{acknowledgments}{Acknowledgements}
\numberwithin{equation}{section}
\numberwithin{figure}{section}
\def\C{\mathbb C}
\def\D{\mathbb D}
\def\I{\mathbb I}
\def\R{\mathbb R}
\def\T{\mathbb T}
\def\Z{\mathbb Z}
\def\sK{\mathcal K}
\def\phi{\varphi}
\newcommand{\mb}[1]{{\textbf {\textit#1}}}
\newcommand{\sbr}[2]{{\textstyle\genfrac{[}{]}{}{}{#1}{#2}}}
\renewcommand{\ge}{\geqslant}
\renewcommand{\le}{\leqslant}
\renewcommand{\Re}{\mathop{\mathrm{Re}}}
\newcommand{\Ker}{\mathop{\rm Ker}}
\newcommand{\cc}{\mathop{\rm cc}}
\def\cone{\mathop{\mathrm{cone}}}
\newcommand{\lk}{\mathop{\rm lk}\nolimits}
\newcommand{\st}{\mathop{\rm St}\nolimits}
\newcommand{\zk}{\mathcal Z_{\mathcal K}}
\newcommand{\zp}{\mathcal Z_P}
\begin{document}

\title{Complex-analytic structures on moment-angle manifolds}

\author{Taras Panov}
\address{Department of Mathematics and Mechanics, Moscow
State University, Leninskie Gory, 119991 Moscow, Russia,
\newline\indent Institute for Theoretical and Experimental Physics,
Moscow, Russia,\quad \emph{and}
\newline\indent Institute for Information Transmission Problems,
Russian Academy of Sciences} \email{tpanov@mech.math.msu.su}

\author{Yuri Ustinovsky}
\address{Department of Mathematics and Mechanics, Moscow
State University, Leninskie Gory, 119991 Moscow, Russia}
\email{yura.ust@gmail.com}

\thanks{The first author was
supported by the Russian Foundation for Basic Research,
grants~11-01-90413-Укр and~11-01-00694, and a grant from Dmitri
Zimin's `Dynasty' foundation. Both authors were supported by
 grants НШ-4995-2012.1 and МД-2253.2011.1 from the President of
Russia.}



\begin{abstract}
We show that the moment-angle manifolds corresponding to complete
simplicial fans admit non-K\"ahler complex-analytic structures.
This generalises the known construction of complex-analytic
structures on polytopal moment-angle manifolds, coming from
identifying them as LVM-manifolds. We proceed by describing
Dolbeault cohomology and some Hodge numbers of moment-angle
manifolds by applying the Borel spectral sequence to holomorphic
principal bundles over toric varieties.
\end{abstract}

\noindent\emph{To Sabir Gusein-Zade, on the occasion of his 60th birthday}

\medskip

\maketitle

\section{Introduction}
\emph{Moment-angle complexes} $\zk$ are spaces acted on by a torus
and parametrised by finite simplicial complexes~$\sK$. They are
central objects in toric topology, and currently are gaining much
interest in the homotopy theory. Due the their combinatorial
origins, moment-angle complexes also find applications in
combinatorial geometry and commutative algebra.

The construction of the moment-angle complex $\zk$ ascends to the
work of Davis--Januszkiewicz~\cite{da-ja91} on (quasi)toric
manifolds; later $\zk$ was described in~\cite{bu-pa99} as a
certain complex build up from polydiscs and tori. In the case when
$\sK$ is a triangulation (simplicial subdivision) of a sphere,
$\zk$ is a topological manifold, referred to as the
\emph{moment-angle manifold}. Dual triangulations of simple convex
polytopes~$P$ provide an important subclass of sphere
triangulations; the corresponding \emph{polytopal} moment-angle
manifolds are known to be smooth~\cite[Lemma~6.2]{bu-pa02} and
denoted~$\zp$. The manifolds $\zp$ corresponding to \emph{Delzant
polytopes} $P$ are closely related to the construction of
\emph{Hamiltonian toric manifolds} via symplectic reduction; $\zp$
arises as the level set for an appropriate moment map and
therefore embeds into $\C^m$ as a nondegenerate intersection of
real quadrics with rational coefficients~\cite{bu-pa02}. The
topology of moment-angle complexes and manifolds is quite
complicated even for small $\sK$ and $P$. The cohomology ring of
$\zk$ was described in~\cite[Th.~4.2,~4.6]{bu-pa99} (see
also~\cite[\S4]{pano08}), and explicit homotopy and diffeomorphism
types for certain particular families of $\zk$ and $\zp$ were
described in~\cite{gr-th07} and~\cite{gi-lo09} respectively.

On the other hand, manifolds obtained as intersections of quadrics
appeared in holomorphic dynamics as the spaces of leaves for
holomorphic foliations in~$\C^m$. Their topology was studied
in~\cite{lope89}; this study led to a discovery of a new class of
compact non-K\"ahler complex-analytic manifolds in the work of
Lopez de Medrano and Verjovsky~\cite{lo-ve97} and
Meersseman~\cite{meer00}, now known as the \emph{LVM-manifolds}.
Bosio and Meersseman were first to observe that the smooth
manifolds underlying a large class of LVM-manifolds are exactly
polytopal moment-angle manifolds, cf.~\cite{bo-me06}. It therefore
became clear that the moment-angle manifolds $\zp$ admit
non-K\"ahler complex-analytic (or shortly complex) structures
generalising the known families of Hopf and Calabi--Eckmann
manifolds.

The aim of this paper is twofold. First, we intended to give an
explicit intrinsic construction of complex structures on manifolds
$\zp$ within the theory of moment-angle complexes, without
referring to the much developed theory of LVM-manifolds (although
of course using ideas and methodology of this theory). Second, we
aimed at generalising the construction of complex structures on
$\zp$ to the nonpolytopal case (this question was also raised
in~\cite[\S15]{bo-me06}), and extending our knowledge of
invariants of these structures, such as Dolbeault cohomology and
Hodge numbers.

In Section~2 we review the construction of moment-angle complexes
and related spaces, such as the complement $U(\sK)$ of the
coordinate subspace arrangement in $\C^m$ corresponding to~$\sK$,
where $m$ is the number of vertices of~$\sK$. We then restrict
attention to the case when $\sK$ is the underlying complex of a
complete simplicial (but not necessarily rational) fan $\Sigma$
in~$\R^n$. Such $\sK$ are also known as \emph{starshaped spheres}.
We show that the corresponding moment-angle manifold $\zk$ admits
a smooth structure as the transverse space to the orbits of an
action of $\R^{m-n}$ on~$U(\sK)$. The situation here is similar to
the polytopal case of~$\zp$, however, we do not have an explicit
presentation of~$\zk$ as an intersection of quadrics. This also
leaves open a question of whether moment-angle complexes $\zk$
corresponding to more general sphere triangulations~$\sK$ can be
smoothed.

In Section~3 we modify the construction of Section~2 by replacing
the action of $\R^{m-n}$ on $U(\sK)$ by a holomorphic action of a
complex group $C$ isomorphic to~$\C^\ell$ where $m-n=2\ell$,
provided that $m-n$ is even (this can always be achieved by adding
a `ghost' vertex to~$\sK$). The identification of $\zk$ with the
quotient $U(\sK)/C$ endows it with a structure of a complex
manifold of dimension~$m-\ell$.\footnote{While preparing this
paper for publication we discovered that results similar to those
of Section~3 were obtained by Tambour~\cite{tamb10} using a
different approach. In particular, Tambour constructed complex
structures on manifolds $\zk$ coming from \emph{rationally}
starshaped spheres~$\sK$, by relating them to a class of
generalised LVM-manifolds described by Bosio in~\cite{bosi01}.}

In Section~4 we restrict to the polytopal case and relate the
complex structure on $\zp$ coming from the normal fan of $P$ via
the construction of Section~3 to the complex structure coming from
identifying $\zp$ as an LVM-manifold~\cite{bo-me06}. We also
relate the quadratic description of $\zp$ to the quadratic
description of LVM-manifolds.

In Section~5 we consider rational fans $\Sigma$, which give rise
to toric varieties~$X_\Sigma$. The Cox construction identifies
$X_\Sigma$ with the quotient of $U(\sK)$ by an action of an
algebraic group $G$ of dimension~$m-n=2\ell$. In the case of
nonsingular $X_\Sigma$ the inclusion of the complex
$\ell$-dimensional group $C$ into the algebraic group $G$ gives
rise to a holomorphic principal bundle $\zk\to X_\Sigma$ whose
fibre is a compact complex torus of dimension~$\ell$. This extends
the construction of~\cite{me-ve04} of holomorphic principal
bundles over projective toric varieties to the nonpolytopal (and
therefore non projective) case.\footnote{In view of the results
of~\cite{tamb10}, this extension is covered by the results of
Cupit-Foutou and Zaffran~\cite{cu-za07}.} An application of the
Borel spectral sequence to the holomorphic bundle $\zk\to
X_\Sigma$ allows us to describe the Dolbeault cohomology groups
of~$\zk$ in Theorem~\ref{dolbzp}. From this description some Hodge
numbers $h^{p,q}(\zk)$ may be calculated explicitly
(Theorem~\ref{hodge}).

\begin{acknowledgments}
The authors are grateful to an anonymous referee who pointed out
an inaccuracy in the original argument for the properness of the
action in Theorem~\ref{zksmooth}, and suggested a way to correct
it. We also thank another referee for pointing out a connection
between the results of~\cite{tamb10} and~\cite{cu-za07}, which
affected our exposition, and for other useful comments and
suggestions.
\end{acknowledgments}

\section{Moment-angle complexes and manifolds}
Let $\mathcal K$ be an abstract simplicial complex on the set
$[m]=\{1,\ldots,m\}$, i.e. a collection of subsets
$I=\{i_1,\ldots,i_k\}\subset[m]$ closed under inclusion. We refer
to $I\in\sK$ as \emph{simplices} and always assume that
$\varnothing\in\sK$. We denote by~$|\sK|$ a geometric realisation
of~$\sK$, which is a topological space.

Consider the unit polydisc in $\C^m$,
\[
  \D^m=\bigl\{ (z_1,\ldots,z_m)\in\C^m\colon |z_i|\le1,\quad i=1,\ldots,m
  \bigr\}.
\]
Given $I\subset[m]$, define
\[
  B_I:=\bigl\{(z_1,\ldots,z_m)\in
  \D^m\colon |z_j|=1\text{ for }j\notin I\bigl\},
\]
Following~\cite{bu-pa02}, define the \emph{moment-angle complex}
$\zk$ as
\begin{equation}\label{defzk}
  \zk:=\bigcup_{I\in\sK}B_I\subset\D^m
\end{equation}
It is invariant under the coordinatewise action of the standard
torus
\[
  \T^m=\bigl\{(z_1,\ldots,z_m)\in\C^m\colon |z_i|=1,\quad i=1,\ldots,m
  \bigr\}.
\]

The definition of the moment-angle complex $\zk$ is a particular
case of the following general construction.

\begin{construction}[$\sK$-power]
Let $X$ be a space, and $W$ a subspace of $X$. Given
$I\subset[m]$, set
\begin{equation}\label{xwi}
  (X,W)^I:=\bigl\{(x_1,\ldots,x_m)\in X^m\colon x_j\in A\text{ for
  }j\notin I\bigr\}\cong\prod_{i\in I}X\times\prod_{i\notin I}W,
\end{equation}
and define the \emph{$\sK$-power} (also known as the
\emph{polyhedral product}) of $(X,W)$ as
\[
  (X,W)^\sK:=\bigcup_{I\in\sK}(X,W)^I\subset X^m.
\]
Note that we do not assume that $\sK$ contains all one-element
subsets $\{i\}\subset[m]$; we refer to $\{i\}\notin\sK$ as a
\emph{ghost vertex}. Obviously, if $\sK$ has $k$ ghost vertices,
then $(X,W)^\sK\cong(X,W)^{\mathcal K'}\times W^k$, where
$\mathcal K'$ does not have ghost vertices.

It follows from the definition that
\begin{itemize}
\item$\zk=(\D,\T)^\sK$, where $\T$ is the unit circle.
\end{itemize}
Other important particular cases of $\sK$-powers include;
\begin{itemize}
\item
the standard cubical decomposition of the quotient
$\zk/\T^m\cong\cone|\sK|$ (see~\cite[\S6.2]{bu-pa02}), namely,
\begin{equation}\label{cck}
  \cc(\sK):=(\I,1)^\sK\subset\I^m,
\end{equation}
where $\I$ is the unit segment $[0,1]$ and $\I^m$ is the unit
$m$-cube;
\item the \emph{complex coordinate subspace arrangement complement} corresponding to $\sK$
(see~\cite[\S8.2]{bu-pa02}):
\[
  U(\sK):=\C^m\setminus\bigcup_{\{i_1,\ldots,i_k\}\notin\sK}
  \{\mb z\in\C^m\colon z_{i_1}=\ldots=z_{i_k}=0\},
\]
namely,
\[
  U(\sK)=(\C,\C^\times)^\sK,
\]
where $\C^\times=\C\setminus\{0\}$.
\end{itemize}
We obviously have $\zk\subset U(\sK)$. Moreover, $\zk$ is a
$\T^m$-equivariant deformation retract of $U(\sK)$ for every
$\sK$~\cite[Th.~8.9]{bu-pa02}. For more generalisations and
applications of $\sK$-powers see~\cite{b-b-c-g10}.
\end{construction}

It is shown in~\cite[Lemma~6.13]{bu-pa02} that $\zk$ is a (closed)
topological manifold whenever $|\sK|$ is a triangulation of a
sphere; in this case we refer to $\zk$ as a \emph{moment-angle
manifold}. If $\sK=\sK_P$ is the dual triangulation of a simple
convex polytope~$P$, then $\zp:=\mathcal Z_{\sK_P}$ can be
canonically smoothed~\cite[Lemma~6.2]{bu-pa02} using the standard
structure of manifold with corners on~$P$.

The aim of this section is to show that moment-angle manifolds
$\zk$ admit smooth structures for a wider class of simplicial
complexes $\sK$, namely, those corresponding to \emph{complete
simplicial fans}.

A set of vectors $\mb a_1,\ldots,\mb a_k\in\R^n$ defines a convex
polyhedral \emph{cone}
\[
  \sigma=\{\mu_1\mb a_1+\ldots+\mu_k\mb
  a_k\colon\mu_i\in\R,\;\mu_i\ge0\}.
\]
A cone is \emph{rational} if its generating vectors can be chosen
from the integer lattice $\Z^n\subset\R^n$, and is \emph{strongly
convex} if it does not contain a line. A cone is \emph{simplicial}
(respectively, \emph{regular}) if it is generated by a part of
basis of $\R^n$ (respectively, $\Z^n$).

A \emph{fan} is a finite collection
$\Sigma=\{\sigma_1,\ldots,\sigma_s\}$ of strongly convex cones in
some $\R^n$ such that every face of a cone in $\Sigma$ belongs to
$\Sigma$ and the intersection of any two cones in $\Sigma$ is a
face of each. A fan $\Sigma$ is \emph{rational} (respectively,
\emph{simplicial}, \emph{regular}) if every cone in $\Sigma$ is
rational (respectively, simplicial, regular). A fan
$\Sigma=\{\sigma_1,\ldots,\sigma_s\}$ is called \emph{complete} if
$\sigma_1\cup\ldots\cup\sigma_s=\R^n$.

In this section we assume that $\Sigma$ is a simplicial (but not
necessarily rational) fan in $\R^n$ with $m$ one-dimensional
cones, for which we choose generator vectors $\mb a_1,\ldots,\mb
a_m$. Define the \emph{underlying simplicial complex} $\sK_\Sigma$
on $[m]$ as the collection of subsets $I\subset[m]$ such that
$\{\mb a_i\colon i\in I\}$ spans a cone of~$\Sigma$. Note that
$\Sigma$ is complete if and only if $|\sK_\Sigma|$ is a
triangulation of~$S^{n-1}$. Now consider the linear map
\begin{equation}\label{lambdar}
  \Lambda_\R\colon\R^m\to\R^n,\quad\mb e_i\mapsto\mb a_i,
\end{equation}
where $\mb e_1,\ldots,\mb e_m$ is the standard basis of~$\R^m$.
Let
\[
  \R^m_>=\{(y_1,\ldots,y_m)\in\R^m\colon y_i>0\}
\]
be the multiplicative group of $m$-tuples of positive real
numbers, and define
\begin{equation}\label{rsigma}
  R_\Sigma:=\exp(\Ker\Lambda_\R)=
  \bigl\{(y_1,\ldots,y_m)\in\R^m_>\colon
  \prod_{i=1}^my_i^{\langle\mb a_i,\mb u\rangle}=1\text{ for all }\mb u\in\R^n\bigr\},
\end{equation}
where $\langle\:,\;\rangle$ is the standard scalar product
in~$\R^n$. Note that $R_\Sigma\cong\R^{m-n}_>$ if $\Sigma$ is
complete (or contains at least one $n$-dimensional cone).

We let $\R^m_>$ act on the complement $U(\sK_\Sigma)$ by
coordinatewise multiplications and consider the restricted action
of the subgroup $R_\Sigma\subset\R^m_>$. Recall that an action of
a topological group $G$ on a space $X$ is \emph{proper} if the map
$h\colon G\times X\to X\times X$, \ $(g,x)\mapsto (gx,x)$ is
proper.

\begin{theorem}\label{zksmooth}
Let $\Sigma$ be a complete simplicial fan in~$\R^n$ with $m$
one-dimensional cones, and let $\sK=\sK_\Sigma$ be its underlying
simplicial complex. Then
\begin{itemize}
\item[(a)]
the group $R_\Sigma$ acts on $U(\sK)$ freely and properly, and the
quotient $U(\sK)/R_\Sigma$ is a smooth $(m+n)$-dimensional
manifold;

\item[(b)] $U(\sK)/R_\Sigma$ is $\T^m$-equivariantly
homeomorphic to~$\zk$.
\end{itemize}
Therefore, $\zk$ can be smoothed.
\end{theorem}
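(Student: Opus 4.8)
The plan is to exhibit $\zk=(\D,\T)^{\sK}$ as a global cross-section for the $R_\Sigma$-action on $U(\sK)$, and to read off every assertion from this. Concretely, I would study the multiplication map
\[
  \Theta\colon R_\Sigma\times\zk\longrightarrow U(\sK),\qquad (\mb y,\mb w)\mapsto \mb y\mb w,
\]
and prove it is a homeomorphism. Granting this, the rest is formal: $\Theta$ intertwines the $R_\Sigma$-action on $U(\sK)$ with translation on the first factor, and translation of a Lie group on a product is free and proper, so the action on $U(\sK)$ is free and proper with quotient $\zk$; the quotient manifold theorem then endows $U(\sK)/R_\Sigma\cong\zk$ with a smooth structure of dimension $\dim U(\sK)-\dim R_\Sigma=2m-(m-n)=m+n$. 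Since the coordinatewise $\T^m$- and $R_\Sigma$-actions commute and $\zk$ is $\T^m$-invariant, $\Theta$ is $\T^m$-equivariant, which yields the equivariant homeomorphism of~(b).

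First I would dispose of freeness, which is elementary. If $\mb y\mb z=\mb z$ with $\mb y=\exp\xi$, $\xi\in\Ker\Lambda_\R$, then $\xi_i=0$ whenever $z_i\ne0$; writing $\omega=\{i:z_i\ne0\}$, the complement $[m]\setminus\omega$ lies in $\sK$ because $\mb z\in U(\sK)$, so $\{\mb a_i:i\notin\omega\}$ is part of a basis, and the relation $0=\sum_i\xi_i\mb a_i=\sum_{i\notin\omega}\xi_i\mb a_i$ together with this linear independence forces the remaining $\xi_i$ to vanish. Hence $\xi=0$.

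The heart of the argument is the bijectivity of $\Theta$, that is, that every orbit meets $\zk$ in exactly one point. Applying the absolute-value map $\mb z\mapsto(|z_1|,\dots,|z_m|)$, this becomes: for each $\mb r$ with $r_i\ge0$ and $\{i:r_i=0\}\in\sK$ there is a unique $\mb y\in R_\Sigma$ with $(y_ir_i)_i\in\cc(\sK)$. I would first treat the generic stratum $\omega=[m]$ (all $r_i>0$). Setting $\beta_i=-\log r_i$, the point $\mb p=\sum_i\beta_i\mb a_i\in\R^n$ lies, by completeness of $\Sigma$, in the relative interior of a unique cone $\cone(\mb a_i:i\in I_0)$ with $I_0\in\sK$, and simpliciality gives a unique expansion $\mb p=\sum_{i\in I_0}c_i\mb a_i$ with all $c_i>0$. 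Defining $\xi_i=\beta_i$ for $i\notin I_0$ and $\xi_i=\beta_i-c_i$ for $i\in I_0$ produces a vector $\xi\in\Ker\Lambda_\R$ (because $\sum_i\xi_i\mb a_i=\mb p-\mb p=0$) with $\xi_i\le\beta_i$ everywhere and strict locus exactly $I_0\in\sK$; this is precisely the condition $(y_ir_i)_i\in\cc(\sK)$ for $\mb y=\exp\xi$. Uniqueness runs in reverse: any solution with strict locus $I$ forces $\mb p=\sum_{i\in I}(\beta_i-\xi_i)\mb a_i$ to lie in the relative interior of $\cone(\mb a_i:i\in I)$, whence $I=I_0$ by the fan axioms and the coefficients are then determined by simpliciality. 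Points with some vanishing coordinates ($\{i:r_i=0\}=Z\in\sK$) I would reduce to this case by passing to the star of the cone $\cone(\mb a_j:j\in Z)$, a complete simplicial fan in $\R^n/\langle\mb a_j:j\in Z\rangle$, the coordinates indexed by $Z$ being automatically forced into the strict locus.

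Finally, continuity of $\Theta^{-1}$ — equivalently properness of the action, the point flagged in the acknowledgements — I would obtain without constructing an explicit exhaustion. The map $\Theta$ is a continuous bijection between topological manifolds of the same dimension $2m$ (here $R_\Sigma\cong\R^{m-n}$ is smooth and $\zk$ is a closed topological manifold of dimension $m+n$ by~\cite[Lemma~6.13]{bu-pa02}, while $U(\sK)$ is open in $\C^m$), so by invariance of domain it is an open map, hence a homeomorphism. I expect the genuine difficulty to be concentrated in the cross-section claim: securing existence uniformly across the strata where $\omega$ varies, and the fact that the naive formula for the retraction $U(\sK)\to\zk$ is not obviously continuous, since $\mb p$ involves $\log|z_i|\to-\infty$ as $z_i\to0$. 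Routing continuity through invariance of domain is exactly what lets one bypass a delicate direct proof of properness.
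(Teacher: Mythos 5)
Your argument is correct, and the heart of it --- that every $R_\Sigma$-orbit meets $\zk$ exactly once --- coincides with the paper's own: the paper proves this by observing that $\Lambda_\R$ maps $(\R_\le,0)^\sK$ bijectively onto $\R^n$ (each $(\R_\le,0)^I$ going onto $-\sigma$, with completeness of the fan giving bijectivity, stars and links handling points with vanishing coordinates, and triviality of $R_\Sigma\cap\T^m$ lifting the statement from $\cc(\sK)$ up to $\zk$), which up to a sign convention is exactly your ``unique cone containing $\mb p$ in its relative interior, unique coefficients by simpliciality'' argument; your freeness proof is also the paper's. Where you genuinely differ is the treatment of properness. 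The paper proves properness first and directly: given sequences with $\mb x^i\to\mb x$ and $\mb g^i\mb x^i\to\mb y$ in $U(\sK)$, it extracts the index sets $I_\pm$ of coordinates of $\mb g^i$ tending to $\pm\infty$, notes that these span cones $\sigma_\pm$ with $\sigma_+\cap\sigma_-=\{0\}$, and applies a linear functional separating $\sigma_+$ from $\sigma_-$ to the relation $\sum_{k}\gamma^i_k\mb a_k=0$ to force $I_\pm=\varnothing$; statement (b) is then deduced from the general fact that a compact subspace meeting each orbit of a proper action once is homeomorphic to the quotient. You run the logic in the opposite order: bijectivity of $\Theta$ first, then Brouwer's invariance of domain --- using the external fact that $\zk$ is a closed topological $(m+n)$-manifold, \cite[Lemma~6.13]{bu-pa02}, which the paper itself quotes and which applies here since $|\sK_\Sigma|\cong S^{n-1}$ --- to conclude that $\Theta$ is a homeomorphism, after which properness is inherited from the obviously free and proper translation action on $R_\Sigma\times\zk$. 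Both routes are sound. Yours buys a cheap, robust properness proof (notably, the very step the acknowledgements identify as the delicate one), at the cost of invoking invariance of domain and the manifold property of $\zk$ as a black box; the paper's route is self-contained on that point and, through its explicit charts $\overline q_I$, additionally produces the atlas and the manifold-with-corners structure on $\zk/\T^m$ that are reused later, in particular in the proof of Theorem~\ref{zkcomplex}, where both the properness of the $C_{\Psi,\Sigma}$-action and the holomorphic atlas are reduced to the corresponding statements for~$R_\Sigma$.
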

\begin{proof}
We first prove statement~(a). The fact that $R_\Sigma$ acts on
$U(\sK)$ freely is standard, and is proved in the same way
as~\cite[Prop.~6.5]{bu-pa02}. Indeed, a point $\mb z\in U(\sK)$
has a nontrivial isotropy subgroup with respect to the action of
$\R^m_>$ only if some of its coordinates vanish. These
$\R^m_>$-isotropy subgroups are of the form $(\R_>,1)^I$,
see~\eqref{xwi}, for some $I\in\sK$. The restriction of
$\exp\Lambda_\R$ to every such $(\R_>,1)^I$ is an injection.
Therefore, $R_\Sigma=\exp(\Ker\Lambda_\R)$ intersects every
$\R^m_>$-isotropy subgroup only at the unit, which implies that
the $R_\Sigma$-action on $U(\sK)$ is free.

Let us prove that the $R_\Sigma$-action on~$U(\sK)$ is proper. Let
$\{\mb g^i\}\in R_\Sigma$, $\{\mb x^i\}\in U(\sK)$ be sequences of
points such that $\{\mb x^i\}$ and $\{\mb y^i\}:=\{\mb g^i\mb
x^i\}$ have limits in $U(\sK)$:
\[
  \{\mb x^i\}\to {\mb x}=(x_1,\dots,x_m),\quad
  \{\mb y^i\}\to {\mb y}=(y_1,\dots, y_m).
\]
We claim that a subsequence of $\{\mb g^i\}$ has limit
in~$R_\Sigma$. Indeed, every $\mb g^i$ is represented by an
$m$-tuple,
\[
\mb g^i=(\exp{\gamma^i_1},\ldots,\exp{\gamma^i_m})\in\exp\R^m.
\]
Passing to a subsequence of $\{\mb g^i\}$ if necessary, we may
assume that every sequence $\{\gamma^i_k\}$, \ $k=1,\ldots,m$, has
a finite or infinite limit (including $\pm\infty$). Let
\[
  I_+=\{k\colon\gamma^i_k\to +\infty\}\subset[m],\quad
  I_-=\{k\colon\gamma^i_k\to -\infty\}\subset[m].
\]
Since sequences $\{\mb x^i\}$ and $\{\mb y^i\}$ are bounded,
$x_k=0$ for $k\in I_+$ and $y_k=0$ for $k\in I_-$. The definition
of $U(\sK)$ implies that $I_+$ and $I_-$ are simplices of $\sK$.
Let $\sigma_+,\sigma_-$ be the corresponding cones of the
fan~$\Sigma$. Since $\sigma_+\cap\sigma_-=\{0\}$, there exists a
linear function $\xi$ on $\R^n$ such that $\xi(v)>0$ for $v\in
\sigma_+, v\neq 0$, and $\xi(v)<0$ for $v\in \sigma_-, v\neq 0$.
Recall that $\mb g^i\in R_\Sigma=\exp(\Ker\Lambda_\R)$, therefore,
\[
  0=\xi\biggl(\sum_{k=1}^m \gamma^i_k \mb a_k\biggr)=\sum_{k=1}^m \gamma^i_k \xi(\mb
  a_k).
\]
Hence, both $I_+$ and $I_-$ are empty (otherwise the latter sum
tends to infinity). Thus, $\mb g^i$ converges to a point in
$R_\Sigma$, and the preimage of any compact subspace in
$U(\sK)\times U(\sK)$ under the action map $h\colon R_\Sigma\times
U(\sK)\to U(\sK)\times U(\sK)$ is compact.
This proves the properness of the action. Since the Lie group
$R(\Sigma)$ acts smoothly, freely and properly on
$U(\sK)$, the orbit space
$U(\sK)/R(\Sigma)$ admits a structure of a smooth manifold by the
standard result~\cite[Th.~9.16]{lee00}.

In our case it is possible to construct a smooth atlas on
$U(\sK)/R_\Sigma$ explicitly. To do this, it is convenient to
pre-factorise everything by the action of $\T^m$. The quotient
$U(\sK)/\T^m$ has the following decomposition as a $\sK$-power:
\[
  U(\sK)/\T^m=(\R_\ge,\R_>)^\sK=\bigcup_{I\in\sK}(\R_\ge,\R_>)^I,
\]
where $\R_\ge$ is the set of nonnegative reals. Since the fan
$\Sigma$ is complete, we may take the union above only over
$n$-element simplices $I=\{i_1,\ldots,i_n\}\in\sK$. Consider one
such simplex~$I$; the generators of the corresponding
$n$-dimensional cone $\sigma\in\Sigma$ are $\mb a_{i_1},\ldots,\mb
a_{i_n}$. Let $\mb u_1,\ldots,\mb u_n$ denote the dual basis of
$\R^n$ (which is a generator set of the dual cone $\sigma^*$).
Then we have $\langle\mb a_{i_k},\mb u_j\rangle=\delta_{jk}$. Now
consider the map
\begin{align*}
  p_I\colon(\R_\ge,\R_>)^I&\to\R^n_\ge\\
  (y_1,\ldots,y_m)&\mapsto
  \Bigr(\prod_{i=1}^my_i^{\langle\mb a_i,\mb u_1\rangle},\ldots,\:
  \prod_{i=1}^my_i^{\langle\mb a_i,\mb u_n\rangle}\Bigl),
\end{align*}
where we set $0^0=1$. Note that zero cannot occur with a negative
exponent in the right hand side, hence $p_I$ is well defined as a
continuous map. Every $(\R_\ge,\R_>)^I$ is $R_\Sigma$-invariant,
and it follows from~\eqref{rsigma} that $p_I$ induces an injective
map
\[
  q_I\colon(\R_\ge,\R_>)^I/R_\Sigma\to\R^n_\ge.
\]
This map is also surjective since every
$(x_1,\ldots,x_n)\in\R^n_\ge$ is covered by $(y_1,\ldots,y_m)$
where $y_{i_j}=x_j$ for $1\le j\le n$ and $y_k=1$ for
$k\notin\{i_1,\ldots,i_n\}$. Hence, $q_I$ is a homeomorphism. It
is covered by a $\T^m$-equivariant homeomorphism
\[
  \overline q_I\colon
  (\C,\C^\times)^I/R_\Sigma\to\C^n\times\T^{m-n},
\]
where $\C^n$ is identified with a quotient of $\R_\ge^n\times\T^n$
in the standard way (e.g., using the polar coordinates in each
factor). Since $U(\sK)/R_\Sigma$ is covered by open subsets
$(\C,\C^\times)^I/R_\Sigma$, and $\C^n\times\T^{m-n}$ embeds as an
open subset in $\R^{m+n}$, the set of homeomorphisms $\{\overline
q_I\colon I\in\sK\}$ provides an atlas for $U(\sK)/R_\Sigma$
(compare~\cite[proof of Lemma~6.2]{bu-pa02}). The change of
coordinates transformations $\overline q_J\overline
q_I^{-1}\colon\C^n\times\T^{m-n}\to\C^n\times\T^{m-n}$ are smooth
by inspection; thus $U(\sK)/R_\Sigma$ is a smooth manifold.

\begin{remark}
The set of homeomorphisms
$\{q_I\colon(\R_\ge,\R_>)^I/R_\Sigma\to\R^n_\ge\}$ defines a
canonical atlas for the smooth manifold with corners $\zk/\T^m$.
If $\sK=\sK_P$ for a simple polytope $P$, then this smooth
structure with corners coincides with that of~$P$.
\end{remark}

If $X$ is a Hausdorff locally compact space with a proper
$G$-action, and $Y\subset X$ a compact subspace which intersects
every $G$-orbit at a single point, then $Y$ is homeomorphic to the
orbit space $X/G$. Therefore, to prove statement~(b) it is enough
to verify that every
$R_\Sigma$-orbit intersects $\zk\subset U(\sK)$ at a single point. 
We first prove that the $R_\Sigma$-orbit of any $\mb y\in
U(\sK)/\T^m=(\R_\ge,\R_>)^\sK$ intersects $\zk/\T^m$ at a single
point. For this we use cubical decomposition $\cc(\sK)$ of
$\zk/\T^m$, see~\eqref{cck}.

Assume first that $\mb y\in\R^m_>$. The $R_\Sigma$-action on
$\R^m_>$ is obtained by exponentiating the linear action of
$\Ker\Lambda_\R$ on $\R^m$. Consider the subset
$(\R_\le,0)^\sK\subset\R^m$, where $\R_\le$ denotes the set of
nonpositive reals. It is taken by the exponential map
$\exp\colon\R^m\to\R^m_>$ homeomorphically onto
$((0,1],1)^\sK=\cc^\circ(\sK)\subset\R^m_>$, where the latter is
the relative interior of $\cc(\sK)$. The map
\begin{equation}\label{oto}
  \Lambda_\R\colon(\R_\le,0)^\sK\to\R^n
\end{equation}
takes every $(\R_\le,0)^I$ to $-\sigma$, where $\sigma\in\Sigma$
is the cone corresponding to $I\in\sK$. Since $\Sigma$ is
complete, map~\eqref{oto} is one-to-one.

The orbit of $\mb y$ under the action of $R_\Sigma$ coincides with
the set of points $\mb w\in\R^m_>$ such that $\exp\Lambda_\R\mb
w=\exp\Lambda_\R\mb y$. Since $\Lambda_\R\mb y\in\R^n$ and
map~\eqref{oto} is one-to-one, there is a unique point $\mb
y'\in(\R_\le,0)^\sK$ such that $\Lambda_\R\mb y'=\Lambda_\R\mb y$.
Since $\exp\Lambda_\R\mb y'\subset\cc^\circ(\sK)$, the
$R_\Sigma$-orbit of $\mb y$ intersects $\cc^\circ(\sK)$ and
therefore $\cc(\sK)$ at a unique point.

Now let $\mb y\in(\R_\ge,\R_>)^\sK$ be an arbitrary point. Let
$I(\mb y)\in\sK$ be the set of zero coordinates of~$\mb y$, and
let $\sigma\in\Sigma$ be the cone corresponding to $I(\mb y)$. The
cones containing $\sigma$ constitute a fan $\st\sigma$ (called the
\emph{star} of~$\sigma$) in the quotient space $\R^n/\R^{I(\mb
y)}$. Its underlying simplicial complex is the \emph{link} $\lk
I(\mb y)$ of $I(\mb y)$ in~$\sK$. Now observe that the action of
$R_\Sigma$ on the set
\[
  \{(y_1,\ldots,y_m)\in(\R_\ge,\R_>)^\sK\colon y_i=0\text{ for }i\in I(\mb
  y)\}\cong(\R_\ge,\R_>)^{\lk I(\mb y)}
\]
coincides with the action of the group $R_{\st\sigma}$. Now we can
repeat the above arguments for the complete fan $\st\sigma$ and
the action of $R_{\st\sigma}$ on $(\R_\ge,\R_>)^{\lk I(\mb y)}$.
As the result, we obtain that every $R_\Sigma$-orbit intersects
$\cc(\sK)$ at a unique point.

To finish the proof of~(b) we consider the commutative diagram
\[
\begin{CD}
\zk @>>> U(\sK)\\
@VVV @VV\pi V\\
\cc(\sK) @>>> (\R_\ge,\R_>)^\sK
\end{CD}
\]
where the horizontal arrows are embeddings and the vertical ones
are projections onto the quotients of $\T^m$-actions. Note that
the projection $\pi$ commutes with the $R_\Sigma$-actions on
$U(\sK)$ and $(\R_\ge,\R_>)^\sK$, and the subgroups $R_\Sigma$ and
$\T^m$ of $(\C^\times)^m$ intersect trivially. It follows that
every $R_\Sigma$-orbit intersects the full preimage
$\pi^{-1}(\cc(\sK))=\zk$ at a unique point. Indeed, assume that
$\mb z$ and $r\mb z$ are in $\zk$ for some $\mb z\in U(\sK)$ and
$r\in R_\Sigma$. Then $\pi(\mb z)$ and $\pi(r\mb z)=r\pi(\mb z)$
are in $\cc(\sK)$, which implies that $\pi(\mb z)=\pi(r\mb z)$.
Hence, $\mb z=\mb t r\mb z$ for some $\mb t\in\T^m$. We may assume
that $\mb z\in(\C^\times)^m$, so that the action of both
$R_\Sigma$ and $\T^m$ is free (otherwise consider the action on
$U(\lk I(\mb z))$ where $I(\mb z)\in\sK$ is the set of zero
coordinates of~$\mb z$).
It follows that $\mb t r=\mathbf 1$, which implies that $r=\mathbf
1$, since $R_\Sigma$ and $\T^m$ intersect trivially.
\end{proof}

\begin{remark}
Our construction of a smooth structure on $\mathcal
Z_{\sK_\Sigma}$ depends on the geometry of~$\Sigma$. However, we
expect that the smooth structures coming from fans $\Sigma$ and
$\Sigma'$ are the same whenever the underlying simplicial
complexes $\sK_\Sigma$ and $\sK_{\Sigma'}$ are isomorphic.
Equivalently, the quotients $\mathcal Z_{\sK_\Sigma}/\T^m$ and
$\mathcal Z_{\sK_{\Sigma'}}/\T^m$ are diffeomorphic as manifolds
with corners
whenever $\sK_\Sigma=\sK_{\Sigma'}$. It is true in the polytopal
case (see also discussion in Section~4), and also for those fans
$\Sigma$ which are \emph{shellable}. (A shelling order allows us
to use an inductive argument, at each step extending a
diffeomorphism between two $(k-1)$-balls in the boundaries of
$k$-balls to the whole $k$-balls.)
\end{remark}

We do not know if Theorem~\ref{zksmooth} generalises to other
sphere triangulations:

\begin{question}
Describe the class of sphere triangulations $\sK$ for which the
moment-angle manifold $\zk$ admits a smooth structure.
\end{question}

\section{Complex-analytic structures}
Here we show that the even-dimensional moment-angle manifold $\zk$
corresponding to a complete simplicial fan admits a structure of a
complex manifold. The idea is to replace the action of
$\R^{m-n}_>$ on $U(\sK)$ (whose quotient is $\zk$) by a
holomorphic action of $\C^{\frac{m-n}2}$ on the same space.

In this section we assume that $m-n$ is even. We can always
achieve this by formally adding an `empty' one-dimensional cone to
$\Sigma$; this corresponds to adding a ghost vertex to~$\sK$, or
multiplying $\zk$ by a circle. The column of matrix $\Lambda_\R$
corresponding to the `empty' 1-cone is set to be zero. Set
$\ell:=\frac{m-n}2$.

We identify $\C^m$ (as a real vector space) with $\R^{2m}$ using
the map
\begin{equation}\label{crid}
  (z_1,\ldots,z_m)\mapsto(x_1,y_1,\ldots,x_m,y_m),
\end{equation}
where $z_k=x_k+iy_k$ for $k=1,\ldots,m$, and denote the
$\R$-linear map $\C^m\to\R^m$,
$(z_1,\ldots,z_m)\mapsto(x_1,\ldots,x_m)$ by~$\Re$.

\begin{construction}\label{psi}
Choose a linear map $\Psi\colon\C^\ell\to\C^m$ satisfying two
conditions:
\begin{itemize}
\item[(a)] $\Re\circ\Psi\colon\C^\ell\to\R^m$ is a
monomorphism.

\item[(b)] $\Lambda_\R\circ\Re\circ\Psi=0$.
\end{itemize}
This corresponds to choosing a complex structure and specifying a
complex basis in the real vector space
$\Ker\Lambda_\R\cong\R^{2\ell}$. Consider the following
commutative diagram:
\begin{equation}\label{cdiag}
\begin{CD}
  \C^\ell @>\Psi>> \C^m @>\Re>> \R^m @>\Lambda_\R>> \R^n\\
  @. @VV\exp V @VV\exp V @VV\exp V\\\
  @. (\C^\times )^m@>|\cdot|>> \R^m_> @>\exp\Lambda_\R>> \R^n_>
\end{CD}
\end{equation}
where the vertical arrows are the componentwise exponential maps,
and $|\cdot|$ denotes the map
$(z_1,\ldots,z_m)\mapsto(|z_1|,\ldots,|z_m|)$. Now set
\begin{equation}\label{csigma}
  C_{\Psi,\Sigma}:=\exp\Psi(\C^\ell)
  =\bigl\{\bigl(e^{\langle\psi_1,\mb w\rangle},\ldots,
  e^{\langle\psi_m,\mb w\rangle}\bigr)\in(\C^\times )^m\bigr\}
\end{equation}
where $\mb w=(w_1,\ldots,w_\ell)\in\C^\ell$, \ $\psi_i$ denotes
the $i$th row of the $m\times\ell$-matrix $\Psi=(\psi_{ij})$, and
$\langle\psi_i,\mb
w\rangle=\psi_{i1}w_1+\ldots+\psi_{i\ell}w_\ell$. Then
$C_{\Psi,\Sigma}\cong\C^\ell$ is a complex-analytic (but not
algebraic) subgroup in~$(\C^\times )^m$. It acts on $U(\sK)$ by
holomorphic transformations.
\end{construction}

\begin{example}\label{2torus}
Let $\sK$ be a simplicial complex on a two element set consisting
of the empty set only (that is, $\sK$ has two ghost vertices). We
therefore have $n=0$, $m=2$, $\ell=1$, and
$\Lambda_\R\colon\R^2\to0$ is a zero map. Let
$\Psi\colon\C\to\C^2$ be given by $z\mapsto(z,\alpha z)$ for some
$\alpha\in\C$, so that subgroup~\eqref{csigma} is
\[
  C=\{(e^z,e^{\alpha z})\}\subset(\C^\times )^2.
\]
Condition~(b) of Construction~\ref{psi} is void, while
condition~(a) is equivalent to that $\alpha\notin\R$. Then
$\exp\Psi\colon\C\to(\C^\times )^2$ is an embedding, and the
quotient $(\C^\times )^2/C$ with the natural complex structure is
a complex torus $T^2_\C$ with parameter $\alpha\in\C$:
\[
  (\C^\times )^2/C\cong\C/(\Z\oplus\alpha\Z)=T^2_\C(\alpha).
\]

If we start with the empty simplicial complex on the set of
$2\ell$ elements (so that $n=0$, $m=2\ell$), we may obtain an
arbitrary compact complex $\ell$-dimensional torus $T_\C^{2\ell}$
as the quotient $(\C^\times )^{2\ell}/C_{\Psi,\Sigma}$,
see~\cite[Th.~1]{meer00}.
\end{example}

\begin{theorem}\label{zkcomplex}
Let $\Sigma$ be a complete simplicial fan in~$\R^n$ with $m$
one-dimensional cones, and let $\sK=\sK_\Sigma$ be its underlying
simplicial complex. Assume that $m-n=2\ell$ and let
$C_{\Psi,\Sigma}$ be a subgroup of $(\C^\times)^m$ defined
by~\eqref{csigma}. Then
\begin{itemize}
\item[(a)]
the holomorphic action of the group $C_{\Psi,\Sigma}$ on $U(\sK)$
is free and proper, and the quotient $U(\sK)/C_{\Psi,\Sigma}$ has
a structure of a compact complex manifold of complex dimension
$m-\ell$;

\item[(b)] there is a $\T^m$-equivariant diffeomorphism between $U(\sK)/C_{\Psi,\Sigma}$ and
$\zk$ defining a complex structure on $\zk$, in which $\T^m$ acts
by holomorphic transformations.
\end{itemize}
\end{theorem}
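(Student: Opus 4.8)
The plan is to reduce the whole statement to Theorem~\ref{zksmooth} by means of the coordinatewise absolute value map $|\cdot|\colon(\C^\times)^m\to\R^m_>$. First I would record the structural fact on which everything rests. Because $\Sigma$ is complete, $\Lambda_\R$ is onto and $\dim_\R\Ker\Lambda_\R=m-n=2\ell$; conditions~(a) and~(b) of Construction~\ref{psi} then say that $\Re\circ\Psi$ is a monomorphism $\C^\ell\cong\R^{2\ell}$ into $\Ker\Lambda_\R$, hence, by the dimension count, an isomorphism onto $\Ker\Lambda_\R$. Chasing the left-hand square of diagram~\eqref{cdiag} (which expresses $|\cdot|\circ\exp=\exp\circ\Re$) shows that $|\cdot|$ restricts to a \emph{Lie group isomorphism} $C_{\Psi,\Sigma}\to R_\Sigma$ and, in particular, that $C_{\Psi,\Sigma}\cap\T^m=\{\mathbf 1\}$. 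Equivalently, in the polar splitting $(\C^\times)^m\cong\R^m_>\times\T^m$ the subgroup $C_{\Psi,\Sigma}$ is the graph of a homomorphism $R_\Sigma\to\T^m$.

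For part~(a) I would transfer freeness and properness across this isomorphism. Since $|\cdot|$ preserves the vanishing pattern of coordinates it maps $U(\sK)$ into itself; if $c\in C_{\Psi,\Sigma}$ fixes $\mb z\in U(\sK)$ then $|c|\in R_\Sigma$ fixes $|\mb z|\in U(\sK)$, so $|c|=\mathbf 1$ by Theorem~\ref{zksmooth}(a) and hence $c=\mathbf 1$ by injectivity of $|\cdot|$ on $C_{\Psi,\Sigma}$. For properness I would run the same sequential argument as in Theorem~\ref{zksmooth}(a): given $\{c^i\}\subset C_{\Psi,\Sigma}$ and $\{\mb z^i\}\subset U(\sK)$ with $\mb z^i$ and $c^i\mb z^i$ convergent in $U(\sK)$, the sequences $|\mb z^i|$ and $|c^i\mb z^i|=|c^i|\,|\mb z^i|$ converge in $U(\sK)$, so Theorem~\ref{zksmooth}(a) yields a limit of a subsequence of $\{|c^i|\}$ in $R_\Sigma$; as $|\cdot|\colon C_{\Psi,\Sigma}\to R_\Sigma$ is a homeomorphism, $\{c^i\}$ converges in $C_{\Psi,\Sigma}$. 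A free and proper holomorphic action of the complex group $C_{\Psi,\Sigma}\cong\C^\ell$ on the complex manifold $U(\sK)$ then produces a quotient complex manifold of complex dimension $m-\ell$, with $U(\sK)\to U(\sK)/C_{\Psi,\Sigma}$ a holomorphic principal bundle.

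For part~(b) the graph description turns $\zk$ into a global cross-section. Identifying the $\T^m$-quotient map $U(\sK)\to(\R_\ge,\R_>)^\sK$ with $|\cdot|$, one has $\zk=|\cdot|^{-1}(\cc(\sK))$, where $\cc(\sK)$ is the cubical complex~\eqref{cck}. The map $|\cdot|$ sends the $C_{\Psi,\Sigma}$-orbit of $\mb z$ onto the $R_\Sigma$-orbit of $|\mb z|$, which by the proof of Theorem~\ref{zksmooth}(b) meets $\cc(\sK)$ in exactly one point; since $|\cdot|$ is a bijection of $C_{\Psi,\Sigma}$ onto $R_\Sigma$, a single group element realises that point, so each $C_{\Psi,\Sigma}$-orbit meets $\zk$ exactly once. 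The compact cross-section lemma invoked in the proof of Theorem~\ref{zksmooth}(b) then gives a homeomorphism $\zk\cong U(\sK)/C_{\Psi,\Sigma}$, which is $\T^m$-equivariant because $\T^m$ commutes with $C_{\Psi,\Sigma}$ and fixes the radial coordinates. Transporting the complex structure along it defines the complex structure on $\zk$, and $\T^m$ acts holomorphically since it acts by $\C$-linear maps on $U(\sK)$ and descends through the holomorphic quotient; compactness of $U(\sK)/C_{\Psi,\Sigma}$ follows from that of $\zk$.

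The step I expect to be the main obstacle is upgrading the cross-section bijection of part~(b) to a genuine \emph{diffeomorphism} compatible with the smooth structure of Theorem~\ref{zksmooth}, precisely over the strata where $\mb z$ has vanishing coordinates. The naive remedy, an explicit ``untwisting'' map $\mb z\mapsto\widetilde\phi(|\mb z|)^{-1}\mb z$ conjugating the $C_{\Psi,\Sigma}$-action to the $R_\Sigma$-action, fails there, because any extension $\widetilde\phi\colon\R^m_>\to\T^m$ of the graphing homomorphism degenerates as coordinates tend to~$0$. I would instead treat these strata by the link--star reduction of Theorem~\ref{zksmooth}(b): for $\mb z$ with zero-set $I(\mb z)\in\sK$ the action restricts to that of the analogous complex subgroup attached to the complete star fan $\st\sigma$ on $(\C,\C^\times)^{\lk I(\mb z)}$, and an induction on dimension yields both the single-point intersection and the smoothness of the identification read in charts analogous to the $\overline q_I$ of Theorem~\ref{zksmooth}.
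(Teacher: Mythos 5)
Your proposal is correct and follows essentially the same route as the paper's proof: both rest on the observation that $|\cdot|$ restricts to an isomorphism $C_{\Psi,\Sigma}\to R_\Sigma$ (the paper's map $i$), reduce freeness and properness of the action to Theorem~\ref{zksmooth}(a), and deduce~(b) from the single-point intersection of $C_{\Psi,\Sigma}$-orbits with $\zk$ together with the compact cross-section lemma. The obstacle you flag at the end is resolved in the paper by the same local device you sketch: for each $I\in\sK$ the section $(\C,\T)^I\subset(\C,\C^\times)^I$ meets every $C_{\Psi,\Sigma}$-orbit transversely in a single point, exhibiting each chart $(\C,\C^\times)^I/C_{\Psi,\Sigma}$ as a holomorphic $\C^n$-bundle over the compact complex torus $(\C^\times)^{m-n}/C_{\Psi,\Sigma}$, which yields both the holomorphic atlas and the smoothness of the identification.
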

\begin{proof}
We first prove statement (a). The isotropy subgroups of the
$(\C^\times)^m$-action on $U(\sK)$ are of the form
$(\C^\times,1)^I$ for $I\in\sK$. In order to show that the
subgroup $C_{\Psi,\Sigma}$ acts freely we need to check that
$C_{\Psi,\Sigma}$ intersects every $(\C^\times)^m$-isotropy
subgroup only at the unit. Since $C_{\Psi,\Sigma}$ embeds into
$\R^m_>$ by~\eqref{cdiag}, it enough to check that the image of
$C_{\Psi,\Sigma}$ in $\R^m_>$ intersects the image of
$(\C^\times,1)^I$ only at the unit. The former image is $R_\Sigma$
and the latter image is $(\R_>,1)^I$; the triviality of their
intersection follows from Theorem~\ref{zksmooth}~(a).

Now we prove the properness of this action. Consider the
projection $\pi\colon U(\sK)\to(\R_\ge,\R_>)^\sK$ onto the
quotient of the $\T^m$-action, and the commutative square
\[
\begin{CD}
  C_{\Psi,\Sigma}\times U(\sK) @>h_\C>> U(\sK)\times U(\sK)\\
  @VVi\times\pi V @VV\pi\times\pi V\\
  R_\Sigma\times (\R_\ge,\R_>)^\sK@>h_\R>> (\R_\ge,\R_>)^\sK\times(\R_\ge,\R_>)^\sK
\end{CD},
\]
where $h_\C$ and $h_\R$ denote the group action maps, and $i\colon
C_{\Psi,\Sigma}\to R_\Sigma$ is the isomorphism given by the
restriction of $|\cdot|\colon (\C^\times )^m\to \R_>^m$. The
preimage $h^{-1}_\C(V)$ of any compact subset $V\in U(\sK)\times
U(\sK)$ is a closed subset in the set $W=(i\times\pi)^{-1}\circ
h_\R^{-1}\circ(\pi\times\pi)(V)$. The image $\pi\times\pi(V)$ is
compact, the action of $R_\Sigma$ on $(\R_\ge,\R_>)^\sK$ is proper
by the same argument as used in the proof of
Theorem~\ref{zksmooth}~(a), and the map $i\times \pi$ is proper,
since it is the quotient projection of a compact group action.
Hence, $W$ is a compact subset in $C_{\Psi,\Sigma}\times U(\sK)$,
and $h^{-1}_\C(V)$ is compact as a closed subset in~$W$.

The complex group $C_{\Psi,\Sigma}$ acts holomorphically, freely
and properly on the complex manifold $U(\sK)$, therefore by the
complex analogue of~\cite[Th.~7.10]{lee00}, the orbit space admits
a structure of a complex manifold.

Like in the real situation of Section~2, it is possible to
construct an atlas of $U(\sK)/C_{\Psi,\Sigma}$ explicitly. Since
the action of $C_{\Psi,\Sigma}$ on the quotient
$U(\sK)/\T^m=(\R_\ge,\R_>)^\sK$ coincides with the action of
$R_\Sigma$ on the same space, the quotient of
$U(\sK)/C_{\Psi,\Sigma}$ by the action of $\T^m$ has exactly the
same structure of a smooth manifold with corners as the quotient
of $U(\sK)/R_\Sigma$ by $\T^m$ (see the proof of
Theorem~\ref{zksmooth}). This structure is determined by the atlas
$\{q_I\colon(\R_\ge,\R_>)^I/R_\Sigma\to\R^n_\ge\}$, which lifts to
a covering of $U(\sK)/C_{\Psi,\Sigma}$ by open subsets
$(\C,\C^\times)^I/C_{\Psi,\Sigma}$. For every $I\in\sK$, the
subset $(\C,\T)^I\subset(\C,\C^\times)^I$ intersects any orbit of
the $C_{\Psi,\Sigma}$-action on $(\C,\C^\times)^I$ transversely at
a single point. Therefore, every
$(\C,\C^\times)^I/C_{\Psi,\Sigma}\cong(\C,\T)^I$ acquires a
structure of a complex manifold. Since
$(\C,\C^\times)^I\cong\C^n\times(\C^\times)^{m-n}$, and the action
of $C_{\Psi,\Sigma}$ on the $(\C^\times)^{m-n}$ factor is free,
the complex manifold $(\C,\C^\times)^I/C_{\Psi,\Sigma}$ is the
total space of a holomorphic $\C^n$-bundle over the compact
complex torus $T_\C^{2\ell}=(\C^\times)^{m-n}/C_{\Psi,\Sigma}$
(see Example~\ref{2torus}). Writing trivialisations of these
$\C^n$-bundles for every~$I$, we obtain a holomorphic atlas for
$U(\sK)/C_{\Psi,\Sigma}$.

The proof of statement (b) follows the lines of the proof of
Theorem~\ref{zksmooth}~(b). We need to show that every
$C_{\Psi,\Sigma}$-orbit intersects $\zk\subset U(\sK)$ at a single
point. First we show that the $C_{\Psi,\Sigma}$-orbit of every
point in $U(\sK)/\T^m$ intersects $\zk/\T^m=\cc(\sK)$ at a single
point; this follows from the fact that the actions of
$C_{\Psi,\Sigma}$ and $R_\Sigma$ coincide on $U(\sK)/\T^m$. Then
we show that every $C_{\Psi,\Sigma}$-orbit intersects the full
preimage $\pi^{-1}(\cc(\sK))$ at a single point using the fact
that $C_{\Psi,\Sigma}$ and $\T^m$ have trivial intersection in
$(\C^\times)^m$.
\end{proof}

\begin{remark}
Unlike the smooth structure, the complex structure on $\zk$
depends on both the geometry of $\Sigma$ and the choice of~$\Psi$
in Construction~\ref{psi}. The fact that the choice of $\Psi$
affects the complex structure is already clear from
Example~\ref{2torus}.
\end{remark}

\begin{example}[Hopf manifold]\label{hopf}
Let $\Sigma$ be a complete simplicial fan in $\R^n$ whose cones
are generated by all proper subsets of the set of $n+1$ vectors
$\mb e_1,\ldots,\mb e_n,-\mb e_1-\ldots-\mb e_n$. To make $m-n$
even we also add one `empty' 1-cone. We therefore have $m=n+2$,
$\ell=1$. Then $\Lambda_\R\colon\R^{n+2}\to\R^n$ is given by the
matrix $(\mathbf 0\; I\; {-}\mathbf 1)$, where $I$ is the unit
$n\times n$ matrix, and $\mathbf 0$, $\mathbf 1$ are the
$n$-columns of zeros and units respectively.

We have that $\sK$ is the boundary of an $n$-dimensional simplex
with $n+1$ vertices and 1 ghost vertex, $\zk\cong S^1\times
S^{2n+1}$, and $U(\sK)=\C^\times \times(\C^{n+1}\setminus\{0\})$.
Let $\Psi\colon\C\to\C^{n+2}$ be given by $z\mapsto(z,\alpha
z,\ldots,\alpha z)$ for some $\alpha\in\C$. Like in
Example~\ref{2torus}, conditions of Construction~\ref{psi} imply
that $\alpha$ is not a real number, and $\exp\Psi$ embeds $\C$ as
the following subgroup in $(\C^\times )^{n+2}$:
\[
  C=\bigl\{(e^z,e^{\alpha z},\ldots,e^{\alpha z})\colon z\in\C\}\subset(\C^\times )^{n+2}.
\]
By Theorem~\ref{zkcomplex}, $\zk$ acquires a complex structure as
the quotient $U(\sK)/C$:
\[
  \C^\times \times\bigl(\C^{n+1}\setminus\{0\}\bigr)\bigl/\;\{(t,\mb
  w)\!\sim(e^zt,e^{\alpha z}\mb w)\}
  \cong\bigl(\C^{n+1}\setminus\{0\}\bigr)\bigl/\;\{\mb w\!\sim
  e^{2\pi i\alpha}\mb w\},
\]
where $t\in\C^\times $, $\mb w\in\C^{n+1}\setminus\{0\}$. The
latter is the quotient of $\C^{n+1}\setminus\{0\}$ by the action
of $\Z$ generated by the multiplication by $e^{2\pi i\alpha}$. It
is known as a \emph{Hopf manifold}.
\end{example}

Theorem~\ref{zkcomplex} may be generalised to a wider class of
manifolds, namely, to \emph{partial quotients} of~$\zk$ in the
sense of~\cite[\S7.5]{bu-pa02}. Assume there exist a primitive
sublattice $N\subset\Z^m$ of rank $k$ such that $\Lambda_\R N=0$.
We denote the corresponding $k$-torus by $T(N)\subset \T^m$.

\begin{proposition}
The torus $T(N)$ acts freely on $U(\sK)$.
\end{proposition}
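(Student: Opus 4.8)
The plan is to reduce freeness first to a statement about intersections of subtori, and then to a purely lattice-theoretic condition. First I would recall that, since $U(\sK)=(\C,\C^\times)^\sK$, the isotropy subgroup of the $\T^m$-action at a point $\mathbf z$ with zero-coordinate set $I(\mathbf z)=\{j:z_j=0\}$ is the coordinate subtorus $\T_I=\{\mathbf t\in\T^m:t_j=1\text{ for }j\notin I\}$ with $I=I(\mathbf z)$, and that $I(\mathbf z)$ ranges exactly over the simplices of~$\sK$. Hence the $T(N)$-isotropy at $\mathbf z$ is $T(N)\cap\T_{I(\mathbf z)}$, and freeness of the $T(N)$-action is equivalent to $T(N)\cap\T_I=\{\mathbf 1\}$ for every $I\in\sK$.

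Next I would bring in the simpliciality of $\Sigma$. For $I\in\sK$ the generators $\{\mathbf a_i:i\in I\}$ are linearly independent, so $\Lambda_\R$ is injective on $\R^I=\{\mathbf x\in\R^m:x_j=0,\ j\notin I\}$; equivalently $\R^I\cap\Ker\Lambda_\R=\{0\}$. Since $N\otimes\R\subseteq\Ker\Lambda_\R$ by hypothesis, the real spans of the cocharacter lattices $N$ and $\Z^I$ of $T(N)$ and $\T_I$ meet only in~$0$. This already forces the homomorphism $T(N)\to\T^m/\T_I$ to be injective on Lie algebras, so that $T(N)\cap\T_I$ is a finite group; equivalently, the coordinate projection $p\colon N\to\Z^{[m]\setminus I}$ killing the coordinates in $I$ is injective (its kernel $N\cap\Z^I$ sits inside $(N\otimes\R)\cap\R^I=0$).

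It remains to upgrade finiteness to triviality, and this is the point I expect to be the main obstacle. Writing an element of $T(N)\cap\T_I$ as $\exp(2\pi i\mathbf v)$ with $\mathbf v\in N\otimes\R$, membership in $\T_I$ forces $v_j\in\Z$ for all $j\notin I$, while triviality in $T(N)$ means $\mathbf v\in N$; here I use that $N$ is primitive, so $(N\otimes\R)\cap\Z^m=N$, and it therefore suffices to show $\mathbf v\in\Z^m$. Equivalently, the claim reduces to showing that the projected lattice $p(N)\subset\Z^{[m]\setminus I}$ is again primitive (saturated), i.e. that $N+\Z^I$ is saturated in $\Z^m$ for every $I\in\sK$. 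Establishing this integral statement---deducing primitivity of the projection from primitivity of $N$ together with the simplicial combinatorics of $\sK_\Sigma$---is the crux of the argument: the transversality established above only yields finite stabilizers, and turning \emph{finite} into \emph{trivial} is exactly what the primitivity hypothesis on $N$ must be used for.
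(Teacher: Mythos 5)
Your reduction is correct, and up to the point where you stop it runs parallel to the paper's argument: freeness is equivalent to $T(N)\cap(\T,1)^I=\{\mathbf 1\}$ for every $I\in\sK$, and simpliciality of $\Sigma$ together with $N\otimes\R\subseteq\Ker\Lambda_\R$ forces each such intersection to be finite. But your proposal ends by naming, rather than proving, the decisive claim --- that the projected lattice $p(N)\subset\Z^{[m]\setminus I}$ is saturated, equivalently that $N+(\Z,0)^I$ is primitive in $\Z^m$ --- so as written it is an incomplete proof.

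That said, your diagnosis of where the real content lies is sharper than the paper's own proof, which settles this point in one line by asserting that, since the primitive sublattices $(\Z,0)^I$ and $N$ intersect trivially in $\Z^m$, the corresponding tori $(\T,1)^I$ and $T(N)$ intersect trivially in $\T^m$. For general lattices this implication is false: $N=\Z(1,2)$ and $(\Z,0)^{\{1\}}=\Z(1,0)$ are both primitive and meet only in $0$, yet the tori $\{(t,t^2)\}$ and $\{(s,1)\}$ share the nontrivial element $(-1,1)$; triviality of the intersection of the tori is exactly saturation of the sum of the lattices, which primitivity of the two summands does not give. Worse, this failure is realised within the hypotheses of the proposition. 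Take the complete fan in $\R^1$ with chosen generators $\mb a_1=2$, $\mb a_2=-1$ (Sections 2--3 impose no normalisation on the generators, and $\Sigma$ need not be rational); then $\Ker\Lambda_\R=\R(1,2)$, the lattice $N=\Z(1,2)$ is primitive with $\Lambda_\R N=0$, but the element $(-1,1)\in T(N)$ fixes the point $(0,1)\in U(\sK)=\C^2\setminus\{0\}$, so $T(N)$ does not act freely. A rational example with primitive integral generators is the fan of the weighted projective plane with weights $(1,2,1)$, i.e.\ $\mb a_1=(1,0)$, $\mb a_2=(0,1)$, $\mb a_3=(-1,-2)$, with $N=\Z(1,2,1)$: here $(-1,1,-1)\in T(N)$ fixes $(0,1,0)\in U(\sK)$. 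So the step you flag as the crux is not merely missing from your argument --- it cannot be supplied in this generality, and the proposition as stated needs a further hypothesis, precisely the saturation of $N+(\Z,0)^I$ for every $I\in\sK$ that you isolate. That hypothesis does hold in the paper's intended applications (the diagonal $N=\Z(1,\ldots,1)$, and the case $k=m-n$ for a regular fan), and once it is assumed your argument closes immediately.
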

\begin{proof}
As in the proof of the freeness of the $R_\Sigma$-action in
Theorem~\ref{zksmooth}, all $\T^m$-isotropy subgroups are of the
form $(\T,1)^I$ for $I\in\sK$. Since primitive sublattices
$(\Z,0)^I$ and $N$ in $\Z^m$ intersect trivially, the intersection
of the corresponding tori $(\T,1)^I$ and $T(N)$ is also trivial.
\end{proof}

Let $N_\R\subset\R^m$ be the linear subspace generated by~$N$, and
let $\rho\colon \R^m\to\R^m/N_\R$ be the quotient projection. Let
$T_\C(N)=N\otimes_{\Z}\C^\times\cong(\C^\times)^k$ be the
algebraic torus corresponding to~$N$. Assuming that $2\ell=m-n-k$
is even, we have the following generalisation of
Construction~\ref{psi}.

\begin{construction}\label{psi_part}
Choose a linear map $\Omega\colon \C^\ell\to\C^m$ satisfying the
following two conditions:
\begin{itemize}
\item[(a)] $\rho\circ\Re\circ\,\Omega$ is a monomorphism.
\item[(b)] $\Lambda_\R\circ\Re\circ\,\Omega=0$.
\end{itemize}

It follows from (a) that the subgroups $T_\C(N)$ and
$\exp\Omega(\C^\ell)$ have trivial intersection
in~$(\C^\times)^m$, and therefore we may define a subgroup
\begin{equation} \label{csigma_part}
  C_{\Omega,\Sigma}(N):=T_\C(N)\times\exp\Omega(\C^\ell)
  \subset(\C^\times)^m.
\end{equation}
\end{construction}

\begin{theorem}\label{zkcomplex_part}
Let $\Sigma$ and $\sK$ be as in Theorem~\ref{zkcomplex}, and
choose $N$ and $C_{\Omega,\Sigma}(N)$ as in
Construction~\ref{psi_part}. Then
\begin{itemize}
\item[(a)] the holomorphic action of the group
$C_{\Psi,\Sigma}(N)$ on $U(\sK)$ is free and proper, and the
quotient $U(\sK)/C_{\Omega,\Sigma}(N)$ has a structure of a
compact complex manifold of complex dimension $m-\ell-k$;

\item[(b)] there is a $\T^m$-equivariant diffeomorphism between
$U(\sK)/C_{\Omega,\Sigma}(N)$ and $\zk/T(N)$ defining a complex
structure on the quotient $\zk/T(N)$, in which $\T^m/T(N)$ acts by
holomorphic transformations.
\end{itemize}
\end{theorem}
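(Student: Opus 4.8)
The plan is to treat this as an equivariant refinement of Theorem~\ref{zkcomplex}, reducing every assertion to the corresponding statement there by means of the componentwise modulus map $|\cdot|\colon(\C^\times)^m\to\R^m_>$. The key algebraic observation is the following. By condition~(b) of Construction~\ref{psi_part} we have $\Re\Omega(\C^\ell)\subset\Ker\Lambda_\R$, while $N_\R\subset\Ker\Lambda_\R$ because $\Lambda_\R N=0$; condition~(a) forces $\Re\Omega$ to be injective with $\Re\Omega(\C^\ell)\cap N_\R=0$, so $N_\R\oplus\Re\Omega(\C^\ell)$ has dimension $k+2\ell=m-n=\dim\Ker\Lambda_\R$ and therefore equals $\Ker\Lambda_\R$. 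Consequently $|\cdot|$ maps $C_{\Omega,\Sigma}(N)=T_\C(N)\times\exp\Omega(\C^\ell)$ onto $R_\Sigma=\exp(\Ker\Lambda_\R)$, and a short computation with absolute values (using $N_\R\cap\Re\Omega(\C^\ell)=0$ and injectivity of $\exp\Omega$) shows that its kernel is exactly the compact torus $T(N)$; in particular $C_{\Omega,\Sigma}(N)\cap\T^m=T(N)$. This single map carries the whole argument.

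To prove~(a) I would first establish freeness. As in Theorem~\ref{zkcomplex}~(a) the $(\C^\times)^m$-isotropy subgroups on $U(\sK)$ are the $(\C^\times,1)^I$ for $I\in\sK$. If $g\in C_{\Omega,\Sigma}(N)$ lies in such a subgroup, then $|g|\in R_\Sigma\cap(\R_>,1)^I=\{1\}$ by Theorem~\ref{zksmooth}~(a), so $g\in C_{\Omega,\Sigma}(N)\cap\T^m=T(N)$ and $g\in(\T,1)^I$; freeness of the $T(N)$-action (the preceding Proposition) then gives $g=1$. Properness I would deduce from the same commutative square as in Theorem~\ref{zkcomplex}~(a), now with $i\times\pi\colon C_{\Omega,\Sigma}(N)\times U(\sK)\to R_\Sigma\times(\R_\ge,\R_>)^\sK$, where $i$ is the surjection with compact kernel $T(N)$ rather than an isomorphism. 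Since $i$ and $\pi$ are quotient maps by compact group actions, both are proper, hence so is $i\times\pi$; combined with properness of the $R_\Sigma$-action on $(\R_\ge,\R_>)^\sK$ this confines the preimage of a compact set under the action map to a compact set, exactly as before. The quotient then carries a complex structure by the complex quotient theorem cited in Theorem~\ref{zkcomplex}~(a), of complex dimension $m-(k+\ell)$; an explicit holomorphic atlas can be written as in Theorem~\ref{zkcomplex}, the charts being total spaces of holomorphic $\C^n$-bundles over the $\ell$-dimensional compact complex torus $(\C^\times)^{m-n}/C_{\Omega,\Sigma}(N)$, and compactness follows from the homeomorphism with the compact space $\zk/T(N)$ established in~(b).

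For~(b) I would show that the natural map $\zk/T(N)\to U(\sK)/C_{\Omega,\Sigma}(N)$ induced by the inclusions $\zk\subset U(\sK)$ and $T(N)\subset C_{\Omega,\Sigma}(N)$ is a $\T^m/T(N)$-equivariant homeomorphism; equivalently, each $C_{\Omega,\Sigma}(N)$-orbit meets $\zk=\pi^{-1}(\cc(\sK))$ in exactly one $T(N)$-orbit. For surjectivity, projecting to $U(\sK)/\T^m=(\R_\ge,\R_>)^\sK$ turns the $C_{\Omega,\Sigma}(N)$-action into the $R_\Sigma$-action, whose orbits meet $\cc(\sK)$ at a unique point by Theorem~\ref{zksmooth}~(b); lifting a suitable $r\in R_\Sigma$ through the surjection $i$ moves the orbit into $\zk$. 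For injectivity up to $T(N)$, if $\mb z$ and $g\mb z$ both lie in $\zk$ the same uniqueness forces $\pi(\mb z)=\pi(g\mb z)$, hence $g\mb z=\mb t\mb z$ for some $\mb t\in\T^m$; on the free stratum $\mb z\in(\C^\times)^m$ this yields $g=\mb t\in C_{\Omega,\Sigma}(N)\cap\T^m=T(N)$, and the boundary strata are handled by passing to $\lk I(\mb z)$ and its star fan exactly as in Theorem~\ref{zksmooth}~(b). Finally $T(N)\subset C_{\Omega,\Sigma}(N)$ acts trivially on $U(\sK)/C_{\Omega,\Sigma}(N)$, while $\T^m$ acts there holomorphically, so the residual $\T^m/T(N)$-action is holomorphic; transporting it through the homeomorphism endows $\zk/T(N)$ with the asserted complex structure.

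The main obstacle I anticipate is the bookkeeping around the compact factor $T(N)$: verifying precisely that $|\cdot|$ identifies $C_{\Omega,\Sigma}(N)/T(N)$ with $R_\Sigma$ and that $C_{\Omega,\Sigma}(N)\cap\T^m=T(N)$, and then arranging the orbit arguments in~(b) so that they produce a single $T(N)$-orbit rather than a single point. Once this relative version of the modulus map is set up, the freeness, properness and orbit-intersection steps all reduce cleanly to Theorems~\ref{zksmooth} and~\ref{zkcomplex}.
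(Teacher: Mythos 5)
Your proposal is correct and takes essentially the same approach as the paper: the paper omits this proof, saying only that it is similar to that of Theorem~\ref{zkcomplex}, and your argument is exactly that adaptation, with the key relative ingredient — that the modulus map $|\cdot|$ carries $C_{\Omega,\Sigma}(N)$ onto $R_\Sigma$ with kernel precisely $T(N)$, so that $C_{\Omega,\Sigma}(N)\cap\T^m=T(N)$ — verified correctly. The freeness, properness and orbit-intersection steps then reduce to Theorems~\ref{zksmooth} and~\ref{zkcomplex} just as the paper intends.
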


The proof is similar to that of Theorem~\ref{zkcomplex} and is
omitted.

\begin{example}
1. The case $k=0$ gives back Theorem~\ref{zkcomplex}.

2. Let $k=1$ and take $N$ to be the diagonal sublattice of rank
one in~$\Z^m$. The condition $\Lambda_\R N=0$ implies that the
vectors $\mb a_1,\ldots,\mb a_m$ sum up to zero, which can always
be achieved by rescaling them (as $\Sigma$ is a complete fan). As
the result, we obtain a complex structure on the quotient of $\zk$
by the diagonal subgroup in~$\T^m$, provided that $m-n$ is odd. In
the polytopal case $\sK=\sK_P$ this gives the construction of
\emph{LVM manifolds} from~\cite{meer00} (see also~\cite{bo-me06}
and the discussion in the next section).

3. Let $k=m-n$. Then $\Lambda_\R N=0$ implies that the whole
$\Ker\Lambda_\R$ is generated by a primitive lattice. In this case
$\ell=0$ and $C_{\Omega,\Sigma}(N)=T_\C(N)$,
see~\eqref{csigma_part}. The quotient $U(\sK)/T_\C(N)=\zk/T(N)$ is
the \emph{toric variety} corresponding to the rational
fan~$\Sigma$, see Section~\ref{sectoric}.
\end{example}

\section{Moment-angle manifolds from polytopes}
Normal fans of simple convex polytopes $P$ constitute an important
class of complete simplicial fans. The results of the previous
section have interesting specifications in the polytopal case.
Polytopal moment-angle manifolds $\zp$ admit an explicit
description as intersections of quadratic hypersurfaces in~$\C^m$.
This description originates from~\cite[Constr.~6.8]{bu-pa02}, and
explicit quadratic equations for $\zp$ were written
in~\cite[(3.3)]{b-p-r07}.

Since polytopal moment-angle manifolds $\zp$ were shown to be
complex in~\cite{bo-me06}, we find it interesting and important to
relate explicitly the quadratic description of $\zp$
from~\cite{bu-pa02} and~\cite{b-p-r07} to the quadratic
description of LVM-manifolds from~\cite{meer00}
and~\cite{bo-me06}. It helps to understand better how the complex
structures on the polytopal moment-angle manifolds $\zp$ fit the
more general framework of the previous section.

Let $P$ be an $n$-dimensional convex polytope given as an
intersection of $m$ halfspaces in a Euclidean space $\R^n$ with
the scalar product $\langle\;,\:\rangle$:
\begin{equation}\label{ptope}
  P:=\bigl\{\mb x\in\R^n\colon\langle\mb a_i,\mb x\rangle+b_i\ge0\quad\text{for }
  i=1,\ldots,m\bigr\},
\end{equation}
where $\mb a_i\in\R^n$ and $b_i\in\R$. We refer to the right hand
side of~\eqref{ptope} as a \emph{presentation} of $P$ by a system
of linear inequalities. An inequality $\langle\mb a_i,\mb
x\rangle+b_i\ge0$ is \emph{redundant} if it can be removed from
the presentation without changing~$P$. A presentation of $P$
without redundant inequalities (an \emph{irredundant}
presentation) is unique; however for the reasons explained below
we allow redundant inequalities and therefore indeterminacy in the
presentation of~$P$. We consider the hyperplanes
\[
  H_i:=\bigl\{\mb x\in\R^n\colon\langle\mb a_i,\mb x\rangle+b_i=0\bigr\}
\]
for $i=1,\ldots,m$, and refer to a presentation of $P$ as
\emph{generic} if at most $n$ hyperplanes $H_i$ meet at every
point of~$P$. The existence of a generic presentation implies that
$P$ is \emph{simple}, that is, there exactly $n$ facets meet at
every vertex of~$P$. A generic presentation may contain redundant
inequalities, but for every such inequality the intersection of
the corresponding hyperplane $H_i$ with $P$ is empty (that is, the
inequality is strict for every $\mb x\in P$).

\begin{construction}\label{dist}
Let $A_P$ be the $m\times n$ matrix of row vectors $\mb a_i$ and
$\mb b_P$ be the column vector of scalars $b_i\in\R^n$.
Consider the affine map
\[
  i_P\colon \R^n\to\R^m,\quad i_P(\mb x)=A_P\mb x+\mb b_P.
\]
It is monomorphic onto a certain $n$-dimensional plane in $\R^m$,
and $i_P(P)$ is the intersection of this plane with~$\R_\ge^m$.

We define the space $\mathcal Z_P$ from the commutative diagram
\begin{equation}\label{cdiz}
\begin{CD}
  \mathcal Z_P @>i_Z>>\C^m\\
  @VVV\hspace{-0.2em} @VV\mu V @.\\
  P @>i_P>> \R^m_\ge
\end{CD}
\end{equation}
where $\mu(z_1,\ldots,z_m)=(|z_1|^2,\ldots,|z_m|^2)$. The latter
map may be thought of as the quotient map for the coordinatewise
action of the standard torus $\T^m$ on~$\C^m$. Therefore, $\T^m$
acts on $\zp$ with quotient $P$, and $i_Z$ is a $\T^m$-equivariant
embedding.

Now choose an $(m-n)\times m$ matrix $\varGamma=(\gamma_{jk})$
whose rows form a basis of linear relations between the
vectors~${\mb a}_i$, \ $i=1,\ldots,m$. That is, the corresponding
map $\varGamma\colon\R^m\to\R^{m-n}$ completes the short exact
sequence
\begin{equation}\label{pexse}
  0\longrightarrow \R^n\stackrel{A_P}{\longrightarrow}\R^m
  \stackrel{\varGamma}{\longrightarrow}\R^{m-n}\longrightarrow 0.
\end{equation}
Then we may write the image of $P$ under $i_P$ by linear equations
as
\begin{equation}\label{cequations}
  i_P(P)=\bigl\{\mb y\in\R^m_\ge\colon \varGamma\mb y=\varGamma\mb
  b_P,\quad\text{for }1\le i\le m\bigr\}.
\end{equation}
Comparing this with diagram~\eqref{cdiz} we finally obtain that
$\zp$ embeds into $\C^m$ as the set of common zeros of $m-n$ real
quadratic equations
\begin{equation}\label{zpqua}
  \sum_{k=1}^m\gamma_{jk}|z_k|^2=\sum_{k=1}^m\gamma_{jk}b_k,\;\text{ for }
  1\le j\le m-n.
\end{equation}
%
\end{construction}

The following properties of $\zp$ are immediate consequences of
its construction.

\begin{proposition}\label{easyzp}
\begin{itemize}
\item[(1)] Given a point $z\in\zp$, the $k$th coordinate of
$i_Z(z)\in\C^m$ vanishes if and only if $z$ projects onto a point
$\mb x\in P$ such that $\mb x\in H_k$.

\item[(2)] Adding a redundant inequality to~\eqref{ptope} results
in multiplying $\zp$ by a circle.
\end{itemize}
\end{proposition}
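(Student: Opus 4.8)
The plan is to verify both statements directly from the defining diagram~\eqref{cdiz} and the system of quadrics~\eqref{zpqua}, treating each part separately.

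\smallskip

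For part (1), I would argue purely from the commutativity of~\eqref{cdiz}. A point $z\in\zp$ projects to $\mb x\in P$ precisely when $\mu\circ i_Z(z)=i_P(\mb x)=A_P\mb x+\mb b_P$, whose $k$th coordinate is $\langle\mb a_k,\mb x\rangle+b_k$. Since $\mu(z_1,\ldots,z_m)=(|z_1|^2,\ldots,|z_m|^2)$, the $k$th coordinate of $i_Z(z)$ vanishes if and only if $|z_k|^2=0$, i.e. if and only if $\langle\mb a_k,\mb x\rangle+b_k=0$; by definition of $H_k$ this says exactly $\mb x\in H_k$. This is essentially a one-line unwinding of the definitions and requires no estimate, so I expect no obstacle here.

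\smallskip

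For part (2), suppose we add a redundant inequality $\langle\mb a_{m+1},\mb x\rangle+b_{m+1}\ge0$ to the presentation~\eqref{ptope}, obtaining a new polytope presentation on $m+1$ halfspaces that still cuts out the same $P$. Redundancy means (by the remark preceding the proposition) that $\langle\mb a_{m+1},\mb x\rangle+b_{m+1}>0$ strictly for every $\mb x\in P$, so the $(m+1)$st coordinate of $i_P(\mb x)$ is strictly positive on all of $P$. I would then chase the enlarged diagram~\eqref{cdiz}: a point of the new $\zp$ lies over $\mb x\in P$, and its first $m$ coordinates are constrained exactly as before, while the $(m+1)$st coordinate $z_{m+1}$ satisfies only $|z_{m+1}|^2=\langle\mb a_{m+1},\mb x\rangle+b_{m+1}$, a fixed \emph{positive} number depending on $\mb x$. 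Thus $z_{m+1}$ ranges freely over a circle of that radius, which gives a $\T^1$-worth of choices independent of the old data, and the new $\zp$ is the product of the old $\zp$ with this circle. Equivalently, in terms of the quadrics~\eqref{zpqua}, the extra column contributes a relation that can be solved for $|z_{m+1}|^2$ in terms of the others, decoupling $z_{m+1}$ as a free circle factor.

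\smallskip

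The only point requiring care is showing that the circle factor is genuinely a direct product and not merely a fibration, i.e. that the radius of the $z_{m+1}$-circle is bounded away from zero so the product structure is global and $\T^m$-equivariant; this follows from strictness of the redundant inequality on the compact set $P$, which forces $\langle\mb a_{m+1},\mb x\rangle+b_{m+1}\ge\varepsilon>0$ uniformly. I expect this compactness-plus-strictness observation to be the main (and only mild) obstacle; everything else is a direct diagram chase.
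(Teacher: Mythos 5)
Your proposal is correct and follows exactly the route the paper intends: the paper states Proposition~\ref{easyzp} with no proof at all, calling both parts ``immediate consequences of its construction,'' and your diagram chase for (1) and fibrewise-circle-plus-normalisation argument for (2) is precisely that unwinding. The one point you rightly isolate --- that the redundant inequality must be \emph{strict} on $P$ so the circle has everywhere positive radius and the bundle trivialises by rescaling $z_{m+1}$ --- is indeed the only substantive step, and your appeal to the paper's convention (redundant inequalities in a generic presentation have $H_i\cap P=\varnothing$) is the correct way to secure it, since without that convention a redundant hyperplane touching $P$ at a face would make the circle fibre degenerate and statement~(2) false as literally written.
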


\begin{theorem}\label{smgen}
The intersection of quadrics~\eqref{zpqua} defining $\zp$ is
nondegenerate (transverse) if and only if
presentation~\eqref{ptope} is generic.
\end{theorem}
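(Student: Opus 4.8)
The plan is to reduce the transversality of the intersection \eqref{zpqua} to a pointwise linear-algebra condition, and then to match that condition with genericity of \eqref{ptope} through the dictionary of Proposition~\ref{easyzp}. Write $f_j(\mb z)=\sum_{k=1}^m\gamma_{jk}|z_k|^2-\sum_{k=1}^m\gamma_{jk}b_k$ for the $m-n$ real functions whose common zero set is $\zp$, and for $\mb z\in\C^m$ let $I(\mb z)=\{k\colon z_k=0\}$ be its set of vanishing coordinates. Nondegeneracy means that $0$ is a regular value of $(f_1,\dots,f_{m-n})$ along $\zp$, i.e. that the differentials $df_1,\dots,df_{m-n}$ are linearly independent at every $\mb z\in\zp$. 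The key reduction I would prove first is: these differentials are linearly independent at $\mb z$ if and only if the vectors $\{\mb a_k\colon k\in I(\mb z)\}$ are linearly independent in $\R^n$.

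For the computation, identify $\C^m$ with $\R^{2m}$ as in \eqref{crid}, so that $|z_k|^2=x_k^2+y_k^2$ and $df_j=2\sum_k\gamma_{jk}(x_k\,dx_k+y_k\,dy_k)$. Setting $\mu_k:=(\lambda\varGamma)_k=\sum_j\lambda_j\gamma_{jk}$ for a tuple $\lambda=(\lambda_1,\dots,\lambda_{m-n})$, a relation $\sum_j\lambda_j\,df_j=0$ forces $x_k\mu_k=0$ and $y_k\mu_k=0$ for every $k$, that is, $\mu_k=0$ for every $k\notin I(\mb z)$. Since the rows of $\varGamma$ form a basis of the relations between the $\mb a_i$, the assignment $\lambda\mapsto\lambda\varGamma$ is injective, and by the exact sequence \eqref{pexse} its image is exactly the space of relations $\{(\mu_k)\colon\sum_k\mu_k\mb a_k=0\}$, the orthogonal complement of the image of $A_P$. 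Hence a nontrivial relation among the $df_j$ corresponds precisely to a nontrivial relation $\sum_{k\in I(\mb z)}\mu_k\mb a_k=0$, which establishes the reduction.

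For the equivalence with genericity I would invoke Proposition~\ref{easyzp}(1): the projection $\zp\to P$ is onto, and for $\mb z\in\zp$ lying over $\mb x\in P$ the set $I(\mb z)$ consists exactly of the indices $k$ with $\mb x\in H_k$, i.e. of the hyperplanes active at $\mb x$. By the reduction, $\zp$ is nondegenerate if and only if for every $\mb x\in P$ the active normals $\{\mb a_k\colon\mb x\in H_k\}$ are linearly independent. If \eqref{ptope} is not generic, some $\mb x\in P$ lies on more than $n$ hyperplanes, so more than $n$ of the $\mb a_k\in\R^n$ are active and hence dependent, and transversality fails over $\mb x$. Conversely, assume \eqref{ptope} generic and fix $\mb x\in P$; letting $\mb v$ be a vertex of the minimal face of $P$ containing $\mb x$, the active set at $\mb x$ is contained in the active set $I(\mb v)$ at $\mb v$, so it suffices to treat $\mb v$. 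At a vertex the active normals span $\R^n$: if some nonzero $\mb u$ were orthogonal to all of them, then $\mb v\pm t\mb u$ would satisfy all active equalities and, for small $t$, all inactive strict inequalities, contradicting that $\mb v$ is extreme. Thus the $|I(\mb v)|$ active normals have rank $n$, while genericity gives $|I(\mb v)|\le n$; hence $|I(\mb v)|=n$ and they are independent, so every subset, in particular the active set at $\mb x$, is independent and the intersection is transverse.

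The Jacobian computation and the forward direction are routine; the genuine content is the converse, where genericity only bounds the number of active hyperplanes, whereas transversality requires their normals to be linearly independent. The bridge is the extreme-point argument showing that at a vertex the active normals automatically have full rank $n$, together with the reduction of an arbitrary $\mb x$ to a vertex of its minimal face. I expect this passage from a numerical bound to linear independence to be the main obstacle.
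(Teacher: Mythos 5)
Your proof is correct, and its skeleton --- compute the differentials of the quadrics, observe that a linear relation among them is governed by the set $I(\mb z)$ of vanishing coordinates, and translate via Proposition~\ref{easyzp}(1) into a condition on the hyperplanes active at the projection of $\mb z$ --- is the same as the paper's. The differences lie in how the two key sub-steps are discharged. First, where you prove directly from the exact sequence~\eqref{pexse} that the row space of $\varGamma$ is exactly the space of linear relations among the $\mb a_k$, so that dependence of the $df_j$ at $\mb z$ is equivalent to dependence of $\{\mb a_k\colon k\in I(\mb z)\}$, the paper instead phrases nondegeneracy as a rank condition on the matrix $\varGamma'$ obtained by deleting from $\varGamma$ the columns indexed by $I(\mb z)$, and cites \cite[Lemma~2.18]{b-p-r07} for the Gale-type duality: the submatrix $A'$ of $A_P$ with rows indexed by $I(\mb z)$ has full rank if and only if $\varGamma'$ has rank $m-n$. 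Your exact-sequence argument is in effect a self-contained proof of that cited lemma in the form needed here. Second, the step you single out as the genuine content --- passing from the numerical bound ``at most $n$ active hyperplanes'' to linear independence of the active normals --- is indeed glossed over in the paper: it simply asserts that under genericity the hyperplanes $H_{j_1},\ldots,H_{j_k}$ meet transversely, which is the standard fact about simple polytopes that you prove via the reduction to a vertex of the minimal face and the extreme-point argument. So your route buys self-containedness (no external citation, no appeal to polytope folklore) at the cost of length, while the paper buys brevity by outsourcing exactly those two points; the non-generic direction is identical in both.
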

\begin{proof}
For simplicity, we do not distinguish between $\zp$ and its image
$i_Z(\zp)$ in $\C^m$ in this proof. Using
identification~\eqref{crid} we observe that the gradients of the
$m-n$ quadratic forms in~\eqref{zpqua} at a point $\mb
z=(x_1,y_1,\ldots,x_m,y_m)\in\zp$ are
\begin{equation}\label{grve}
  2\left(\gamma_{j1}x_1,\,\gamma_{j1}y_1,\,\dots,\,\gamma_{jm}x_m,\,\gamma_{jm}y_m\right),\quad
  1\leq j\leq m-n.
\end{equation}
These gradients form the rows of the $(m-n)\times 2m$ matrix
$2\varGamma\varDelta$, where
\[
\varDelta\;=\;
\begin{pmatrix}
  x_1&y_1& \ldots & 0  &0 \\
  \vdots & \vdots & \ddots & \vdots & \vdots\\
  0  & 0 & \ldots &x_m &y_m
\end{pmatrix}.
\]
Assume that for the chosen point $\mb z\in\zp$ we have
$z_{j_1}=\ldots=z_{j_k}=0$, and the other complex coordinates are
nonzero. Then the rank of the gradient matrix
$2\varGamma\varDelta$ at $\mb z$ is equal to the rank of the
$(m-n)\times(m-k)$ matrix $\varGamma'$ obtained by deleting the
columns $j_1,\ldots,j_k$ from~$\varGamma$.

By Proposition~\ref{easyzp}, the point $\mb z$ projects onto $\mb
x\in H_{j_1}\cap\ldots\cap H_{j_k}$.

If presentation~\eqref{ptope} is generic, then the hyperplanes
$H_{j_1},\ldots,H_{j_k}$ meet transversely (in particular, $k\le
n$). It follows that the rank of the $k\times n$ submatrix $A'$ of
$A_P$ formed by the rows $\mb a_{j_1},\ldots,\mb a_{j_k}$ is~$k$.
This implies that $\varGamma'$ has rank $m-n$
(see~\cite[Lemma~2.18]{b-p-r07}), and therefore~\eqref{zpqua} is
nondegenerate at~$\mb z$.

On the other hand, if~\eqref{ptope} is not generic, then we
consider a point $\mb z\in\zp$ which projects to the intersection
of $k>n$ hyperplanes $H_i$. Then for this $\mb z$ the matrix
$\varGamma'$ has $m-k<m-n$ columns, and therefore its rank is less
than $m-n$. It follows that~\eqref{zpqua} is degenerate at~$\mb
z$.
\end{proof}

\begin{corollary}[{\cite[Cor.~3.9]{bu-pa02}, \cite[Lemma~4.2]{b-p-r07}}]
$\zp$ is a smooth manifold of dimension $m+n$. Moreover, the
embedding $i_z\colon\zp\to\C^m$ is $T^m$-equivariantly framed by
any choice of matrix $\varGamma$ in~\eqref{pexse}.
\end{corollary}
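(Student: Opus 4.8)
The plan is to deduce both assertions directly from Theorem~\ref{smgen}, since the hard analytic work—establishing transversality of the defining quadrics—has already been done there. First I would fix a generic presentation~\eqref{ptope} of $P$. Such a presentation is available because $P$ is simple: the irredundant presentation already has at most $n$ of the hyperplanes $H_i$ meeting at any point of $P$, and one is free to adjoin redundant inequalities without destroying genericity. With a generic presentation in hand, any choice of matrix $\varGamma$ completing the short exact sequence~\eqref{pexse} produces the system of $m-n$ real quadrics~\eqref{zpqua} cutting out $\zp\subset\C^m$.

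The smoothness and dimension then follow from Theorem~\ref{smgen} by the standard regular-value (transversality) argument. Since genericity guarantees that the quadrics~\eqref{zpqua} meet transversely at every point of $\zp$, the common zero set is a smooth submanifold of $\C^m=\R^{2m}$, and a dimension count gives $\dim\zp=2m-(m-n)=m+n$.

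For the framing statement I would read off the normal trivialisation from the gradient computation~\eqref{grve}: the $m-n$ gradient vector fields are exactly the rows of $2\varGamma\varDelta$, they are globally defined along $\zp$, and by the nondegeneracy established in Theorem~\ref{smgen} they are linearly independent at each point. Hence they span the normal space everywhere and furnish an explicit trivialisation of the normal bundle of $i_Z\colon\zp\to\C^m$, manifestly determined by the chosen $\varGamma$.

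Finally I would check the $\T^m$-equivariance. The torus $\T^m$ acts on $\C^m$ coordinatewise by rotations, so it preserves each function $|z_k|^2$ and thus fixes the quadrics~\eqref{zpqua}; the gradient of $|z_k|^2$ is the radial field $(2x_k,2y_k)$ in the $k$th coordinate plane, which is carried by the torus action compatibly, so the normal framing given by~\eqref{grve} is $\T^m$-equivariant. The proof is thus essentially immediate given Theorem~\ref{smgen}; the only points requiring any care are confirming the availability of a generic presentation and verifying that the radial gradient fields transform correctly under the torus, and both are routine. I do not expect a genuine obstacle here, as all of the substantive content is supplied by Theorem~\ref{smgen}.
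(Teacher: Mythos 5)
Your proposal is correct and follows essentially the same route the paper intends: the corollary is presented as an immediate consequence of Theorem~\ref{smgen}, with smoothness and the dimension count $2m-(m-n)=m+n$ coming from the regular-value/transversality argument, and the framing given by the gradient rows of $2\varGamma\varDelta$ from~\eqref{grve}, which are $\T^m$-equivariant because each quadric in~\eqref{zpqua} is a $\T^m$-invariant function. Your one point of care—fixing a generic presentation, available since $P$ is simple—is exactly the standing hypothesis under which the corollary is stated.
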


Assume now that~\eqref{ptope} is generic; in particular, $P$ is
simple. Set
\[
  F_i:=H_i\cap P=\bigl\{\mb x\in P\colon\langle\mb a_i,\mb x\rangle+b_i=0\bigr\}
\]
for $i=1,\ldots,m$. Note that $F_i$ is either empty (if the $i$th
inequality is redundant), or is a facet of~$P$. The \emph{normal
fan} $\Sigma_P$ of $P$ consists of cones spanned by the sets of
vectors $\{\mb a_{j_1},\ldots,\mb a_{j_k}\}$ for which the
intersection $F_{j_1}\cap\ldots\cap F_{j_k}$ is nonempty. It is a
complete simplicial fan in~$\R^n$. We formally add to $\Sigma_P$
`empty 1-dimensional cones' corresponding to redundant
inequalities (or `empty facets'~$F_i$). Then the underlying
simplicial complex $\sK_P=\sK_{\Sigma_P}$ is on the set $[m]$ and
has a ghost vertex for every redundant inequality.

\begin{proposition}
The manifolds $\mathcal Z_{\sK_P}$ and $\mathcal Z_P$ defined
by~\eqref{defzk} and~\eqref{cdiz} are $\T^m$-equivariantly
homeomorphic.
\end{proposition}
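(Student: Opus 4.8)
The plan is to realise both $\mathcal Z_{\sK_P}$ and $\zp$ as the \emph{same} canonical $\T^m$-space built over their common orbit space, thereby reducing the statement to a face-preserving homeomorphism of orbit spaces. Both spaces fibre over a manifold-with-corners quotient: $\mathcal Z_{\sK_P}=(\D,\T)^{\sK_P}$ over $\cc(\sK_P)=\cone|\sK_P|$, and $\zp=\mu^{-1}(i_P(P))$ over $P$ via $\mu$ and $i_P$ from~\eqref{cdiz}. For a point $q$ in either quotient let $\omega(q)\subset[m]$ be its \emph{vanishing set}, the indices of the coordinates forced to vanish. By Proposition~\ref{easyzp}(1) the vanishing set over $\mb x\in P$ is $\{k\colon\mb x\in F_k\}$, so the explicit map $\T^m\times P\to\zp$, $(t,\mb x)\mapsto\bigl(t_1\sqrt{(i_P\mb x)_1},\dots,t_m\sqrt{(i_P\mb x)_m}\bigr)$, is a $\T^m$-equivariant continuous surjection identifying $(t,\mb x)\sim(t',\mb x)$ exactly when $t^{-1}t'\in\T^{\omega(\mb x)}$. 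Likewise $\T^m\times\cc(\sK_P)\to\mathcal Z_{\sK_P}$, $(t,s)\mapsto(t_1s_1,\dots,t_ms_m)$, using cubical coordinates $s\in(\I,1)^{\sK_P}$, is equivariant, surjects onto $\bigcup_I B_I$, and collapses the subtorus $\T^{\omega(s)}$ with $\omega(s)=\{k\colon s_k=0\}$. Since $\mathcal Z_{\sK_P}$ and $\zp$ are compact Hausdorff, both maps descend to $\T^m$-equivariant homeomorphisms from the canonical models $\T^m\times Q/\!\sim$.

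It now suffices to produce a homeomorphism $g\colon\cc(\sK_P)\to P$ of orbit spaces with $\omega(g(q))=\omega(q)$, for then $\mathrm{id}_{\T^m}\times g$ respects the identifications $\sim$ on both sides and descends to the desired $\T^m$-equivariant homeomorphism. The map $g$ is dictated by the face correspondence: each simplex $I\in\sK_P$ matches the face $F_I=\bigcap_{i\in I}F_i$ of $P$ (with $I=\varnothing$ giving $P$ itself), which is precisely the correspondence between the faces of $\cone|\sK_P|$ and of the simple polytope $P$. Concretely, I would build $g$ from the atlas of charts $q_I\colon(\R_\ge,\R_>)^I/R_\Sigma\to\R^n_\ge$ constructed in the proof of Theorem~\ref{zksmooth}, indexed by the maximal simplices $I$, i.e. by the vertices of $P$. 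By the Remark following Theorem~\ref{zksmooth}, this atlas equips $\cc(\sK_P)$ with exactly the manifold-with-corners structure of $P$; assembling the charts cube by cube (each $(\I,1)^I\cong\I^n$ mapping onto the cube of $P$ around the corresponding vertex, the pieces agreeing on shared faces) yields $g$, which by construction carries the facet $\{s_k=0\}$ to $F_k$ and hence preserves vanishing sets.

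Assembling the pieces, the composite of $\mathcal Z_{\sK_P}\cong\T^m\times\cc(\sK_P)/\!\sim$, the homeomorphism induced by $\mathrm{id}_{\T^m}\times g$, and $\T^m\times P/\!\sim\;\cong\zp$ is a $\T^m$-equivariant homeomorphism, which proves the proposition.

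The main obstacle is the construction of $g$ itself. The combinatorial matching $I\leftrightarrow F_I$ of faces is immediate, but promoting it to a genuine homeomorphism of orbit spaces---rather than a mere stratumwise bijection---requires care at the corners, where several facets meet. I expect to dispose of this exactly as in the proof of Theorem~\ref{zksmooth}: in the $\R^n_\ge$-charts used there, $g$ becomes the identity and the descended total-space map becomes the standard polar-coordinate identification of $\C^n\times\T^{m-n}$, so that both the homeomorphism property of $g$ and the continuity of its lift across the strata where the fibre circles degenerate are manifest.
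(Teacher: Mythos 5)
Your first two steps are correct and coincide with the paper's own argument: both proofs model $\zp$ and $\mathcal Z_{\sK_P}$ as identification spaces $Q\times\T^m/{\sim}$ over their orbit spaces ($Q=P$ and $Q=\cc(\sK_P)$ respectively), where $(q,\mb t_1)\sim(q,\mb t_2)$ exactly when $\mb t_1^{-1}\mb t_2$ lies in the coordinate subtorus $(\T,1)^{I}$ determined by the vanishing set of~$q$, and both thereby reduce the Proposition to producing a homeomorphism $g\colon\cc(\sK_P)\to P$ that matches vanishing sets. The paper supplies $g$ by citing the identification of a simple polytope with the cubical complex $\cc(\sK_P)\subset\I^m$ built from its barycentric subdivision (\cite[\S4.2]{bu-pa02}), under which a point $p\in P$ goes to a point of $\I^m$ whose zero coordinates are exactly~$I_p$.

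Your construction of $g$ is where the gap lies; you rightly flag it as the main obstacle, but the proposed fix does not close it. First, invoking the Remark after Theorem~\ref{zksmooth} is circular: the assertion there that the corner structure on $\zk/\T^m$ ``coincides with that of $P$'' only makes sense relative to some identification of $\zk/\T^m\cong\cc(\sK_P)$ with $P$, which is precisely what $g$ must provide; the Remark concerns smoothness of an identification taken as given, not its existence. Second, the chart-by-chart assembly fails as stated. On the cubical side, composing the quotient map with $q_I$ sends the cube $(\I,1)^I$ onto $\I^n$ by the coordinate projection $(s_{i_1},\ldots,s_{i_n})$; but on the polytope side there is no preferred ``cube of $P$ around the vertex'' $F_{i_1}\cap\dots\cap F_{i_n}$ until one constructs it, and the natural affine charts $\mb x\mapsto\bigl(\langle\mb a_{i_1},\mb x\rangle+b_{i_1},\ldots,\langle\mb a_{i_n},\mb x\rangle+b_{i_n}\bigr)$ at distinct vertices do not agree on overlaps (already for a polygon the images of a shared cubical edge under the two adjacent vertex charts are different arcs), so there is nothing that ``agrees on shared faces'' to assemble. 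The missing ingredient is exactly the cubical decomposition of the simple polytope $P$ into cubes spanned by barycentres of the faces containing each vertex, i.e.\ the construction of \cite[\S4.2]{bu-pa02} that the paper invokes at this point. Alternatively, staying inside the fan framework, you could obtain $g$ by proving that $i_P(P)\subset(\R_\ge,\R_>)^{\sK_P}$ meets every $R_{\Sigma_P}$-orbit exactly once and composing $P\cong i_P(P)\to(\R_\ge,\R_>)^{\sK_P}/R_{\Sigma_P}\cong\cc(\sK_P)$, the last identification being part of the proof of Theorem~\ref{zksmooth}~(b); but that statement itself requires an argument (cf.\ the Remark in Section~4 of the paper and \cite[Lemma~0.8]{bo-me06}), which you do not give.
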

\begin{proof}
The idea is to write both $\mathcal Z_{\sK_P}$ and $\mathcal Z_P$
as a certain identification space $P\times\T^m/\!\sim\:$, as
in~\cite{da-ja91}.

We consider the map $\R_\ge\times\T\to\C$ defined by $(y,t)\mapsto
yt$. Taking product we obtain a map $\R^m_\ge\times\T^m\to\C^m$.
The preimage of a point $\mb z\in\C^m$ under this map is $\mb
y\times(\T,1)^{I(\mb z)}$, where $y_i=|z_i|$ for $1\le i\le m$, \
$I(\mb z)\subset[m]$ is the set of zero coordinates of~$\mb z$,
and $(\T,1)^{I(\mb z)}\subset\T^m$ is the coordinate subgroup
defined by~\eqref{xwi}. Therefore, $\C^m$ can be identified with
the quotient space $\R^m_\ge\times\T^m/{\sim\:}$, where $(\mb
y,\mb t_1)\sim(\mb y,\mb t_2)$ if $\mb t_1^{-1}\mb
t_2\in(\T,1)^{I(\mb y)}$.

From~\eqref{cdiz} we obtain a similar description of $\zp$ as an
identification space. Namely, given $p\in P$, set
$I_p=\{i\in[m]\colon p\in F_i\}$ (the set of facets containing
$p$). Then $\zp$ is $\T^m$-equivariantly homeomorphic to
\begin{equation}\label{defzp}
  P\times\T^m/{\sim\:}\text{ where }(p,\mb t_1)\sim(p,\mb t_2)\text{ if }\mb t_1^{-1}\mb t_2\in
  (\T,1)^{I_p}.
\end{equation}

On the other hand, $\mathcal Z_{\sK_P}$ can be defined from a
diagram similar to~\eqref{cdiz}:
\[
\begin{CD}
  \mathcal Z_{\sK_P} @>>>\D^m\\
  @VVV\hspace{-0.2em} @VV\mu V @.\\
  \cc(\sK_P) @>>> \I^m
\end{CD}
\]
We have $\D^m\cong\I^m\times\T^m/{\sim\:}$ by restriction of the
corresponding construction for~$\C^m$. Under the identification of
$P$ with $\cc(\sK_P)\subset\I^m$ a point $p\in P$ is mapped to a
point in $\I^m$ whose set of zero coordinates is exactly $I_p$
(see the details in~\cite[\S4.2]{bu-pa02}). Therefore, $\mathcal
Z_{\sK_P}$ can be also identified with~\eqref{defzp}.
\end{proof}

\begin{remark}
It is easy to observe that $\zp\subset U(\sK_P)$. An argument
similar to that outlined in~\cite[Lemma~0.8]{bo-me06} can be used
to show that every orbit of the free action of $R_{\Sigma_P}$
intersects $\zp$ transversely at a single point (see
also~\cite[Appendix~1]{guil94}). Therefore, in the polytopal case
the intersection of quadrics~\eqref{zpqua} can be used instead of
the moment-angle complex~$\mathcal Z_{\sK_P}$ as a transverse set
to the orbits of the $R_{\Sigma_P}$-action. This also implies that
the smooth structure on $\mathcal Z_{\sK_P}$ defined by
Theorem~\ref{zksmooth} is equivalent to the smooth structure on
the intersection of quadrics~\eqref{zpqua}. Another way to see
that these two smooth structures coincide is to identify both with
the smooth structure on $\zp$ from~\cite[Lemma~6.2]{bu-pa02}.
\end{remark}

\begin{corollary}
The $\T^m$-equivariant homeomorphism type of the manifold $\zp$
depends only on the combinatorial type of the polytope~$P$.
\end{corollary}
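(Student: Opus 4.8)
The plan is to reduce this corollary to the immediately preceding proposition, which establishes a $\T^m$-equivariant homeomorphism $\mathcal Z_{\sK_P}\cong\zp$, and then to observe that the right-hand side depends only on combinatorial data. First I would recall that by the proposition, $\zp$ is $\T^m$-equivariantly homeomorphic to $\mathcal Z_{\sK_P}=(\D,\T)^{\sK_P}$, where $\sK_P=\sK_{\Sigma_P}$ is the underlying simplicial complex of the normal fan of~$P$ (with ghost vertices recorded for any redundant inequalities). Since the construction of the $\sK$-power $(\D,\T)^{\sK}$ in~\eqref{defzk} depends \emph{only} on the abstract simplicial complex~$\sK$ together with its $\T^m$-action, the entire $\T^m$-equivariant homeomorphism type of $\mathcal Z_{\sK_P}$, and hence of $\zp$, is determined once we know~$\sK_P$ as an abstract simplicial complex on~$[m]$.

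The remaining point, which is the crux of the argument, is that $\sK_P$ is a combinatorial invariant of the polytope~$P$. Here I would argue that for a simple polytope the dual triangulation $\sK_P$ of its boundary is precisely the nerve of the facets: a subset $\{j_1,\ldots,j_k\}\subset[m]$ lies in $\sK_P$ exactly when the corresponding facet intersection $F_{j_1}\cap\cdots\cap F_{j_k}$ is nonempty. This incidence data among facets is by definition the combinatorial (face-lattice) type of~$P$, and the ghost vertices attached to redundant inequalities are likewise encoded by which facets are empty. Thus two polytopes of the same combinatorial type yield identical simplicial complexes $\sK_P$ on~$[m]$, complete with matching ghost vertices.

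Assembling these observations gives the result: if $P$ and $P'$ are combinatorially equivalent simple polytopes, then $\sK_P=\sK_{P'}$ as abstract simplicial complexes on~$[m]$, whence $\mathcal Z_{\sK_P}$ and $\mathcal Z_{\sK_{P'}}$ coincide as $\T^m$-spaces, and therefore the preceding proposition gives a chain of $\T^m$-equivariant homeomorphisms $\zp\cong\mathcal Z_{\sK_P}=\mathcal Z_{\sK_{P'}}\cong\mathcal Z_{P'}$. I expect the main subtlety to be bookkeeping around redundant inequalities and ghost vertices: one must be careful that the set $[m]$ (and hence the number of circle factors contributed by empty facets, as in Proposition~\ref{easyzp}(2)) is held fixed, so that the comparison of combinatorial types is made for presentations with the same index set rather than only for irredundant presentations.
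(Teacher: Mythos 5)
Your proposal is correct and follows exactly the route the paper intends: the corollary is stated as an immediate consequence of the preceding proposition, since $\zp\cong\mathcal Z_{\sK_P}$ $\T^m$-equivariantly and $\mathcal Z_{\sK_P}=(\D,\T)^{\sK_P}$ depends only on the abstract complex $\sK_P$, which is the nerve of the facets and hence a combinatorial invariant of~$P$ (with ghost vertices accounting for redundant inequalities). Your added bookkeeping about keeping the index set $[m]$ fixed is a sensible clarification of the same argument, not a departure from it.
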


We therefore refer to either $\mathcal Z_{\sK_P}$ or $\mathcal
Z_P$ as the \emph{moment-angle manifold corresponding to simple
polytope}~$P$, or shortly \emph{polytopal moment-angle manifold}.

\begin{corollary}\label{zpcomplex}
The moment-angle manifold $\zp$ admits a complex structure as the
quotient $U(\sK_P)/C_{\Psi,\Sigma_P}$ if $\dim\zp=m+n$ is even. If
$m+n$ is odd, then the product $\zp\times S^1$ admits such a
structure.
\end{corollary}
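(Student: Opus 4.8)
The plan is to deduce this directly from Theorem~\ref{zkcomplex} applied to the normal fan $\Sigma_P$, using the identification of $\zp$ with $\mathcal Z_{\sK_P}$ established in the proposition above. Recall that $\Sigma_P$ is a complete simplicial fan in $\R^n$ whose underlying complex is $\sK_P=\sK_{\Sigma_P}$, and that the map $\Lambda_\R\colon\R^m\to\R^n$ is surjective (the fan being complete, its one-dimensional generators span $\R^n$), so that $\Ker\Lambda_\R$ has real dimension $m-n$. Thus the whole machinery of Section~3 is available once we arrange the parity correctly.

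First I would treat the even case. Since $\dim\zp=m+n$ and $(m+n)-(m-n)=2n$, the integers $m+n$ and $m-n$ have the same parity; hence $m+n$ even forces $m-n=2\ell$ for some integer $\ell$. Then $\Ker\Lambda_\R\cong\R^{2\ell}$ admits a complex structure, which is precisely the data of a map $\Psi$ satisfying conditions~(a) and~(b) of Construction~\ref{psi}, so that $C_{\Psi,\Sigma_P}$ is defined. Theorem~\ref{zkcomplex} now applies verbatim with $\Sigma=\Sigma_P$: the quotient $U(\sK_P)/C_{\Psi,\Sigma_P}$ is a compact complex manifold, and the $\T^m$-equivariant diffeomorphism of part~(b) transports this complex structure onto $\mathcal Z_{\sK_P}$, which is $\T^m$-equivariantly homeomorphic to $\zp$. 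This yields the asserted complex structure on $\zp$.

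For the odd case I would reduce to the even one by the ghost-vertex trick of Section~3. If $m+n$ is odd then so is $m-n$, and I adjoin to $\Sigma_P$ a single empty one-dimensional cone, setting its column of $\Lambda_\R$ to zero. This replaces $\sK_P$ by a complex $\sK_P'$ with one additional ghost vertex, leaving $n$ unchanged while raising $m$ to $m+1$, so that the new difference $(m+1)-n$ is even. By the $\sK$-power description $\zk=(\D,\T)^\sK$ together with the splitting $(X,W)^{\sK_P'}\cong(X,W)^{\sK_P}\times W$ for a single ghost vertex (here $W=\T=S^1$), the corresponding moment-angle manifold is $\mathcal Z_{\sK_P'}\cong\mathcal Z_{\sK_P}\times S^1\cong\zp\times S^1$. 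Applying the even case to the enlarged fan then equips $\zp\times S^1$ with a complex structure, as required.

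I do not expect a genuine obstacle here, since all the analytic work, namely the freeness and properness of the action, the construction of the holomorphic atlas, and the $\T^m$-equivariant diffeomorphism, has already been carried out in Theorem~\ref{zkcomplex}. The only points requiring care are bookkeeping ones: checking that the parity of $m+n$ matches that of $m-n$, so that the hypothesis $m-n=2\ell$ of Theorem~\ref{zkcomplex} is met, and verifying that adjoining one empty cone multiplies the moment-angle manifold by exactly one circle rather than altering it more drastically.
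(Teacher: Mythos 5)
Your proposal is correct and follows essentially the same route as the paper: identify $\zp$ with $\mathcal Z_{\sK_P}$, apply Theorem~\ref{zkcomplex} to the normal fan when $m+n$ (equivalently $m-n$) is even, and in the odd case add one ghost vertex (the paper phrases this as adjoining a redundant inequality $1\ge0$ to the presentation of $P$, which by Proposition~\ref{easyzp} multiplies $\zp$ by a circle). The parity bookkeeping and the circle-factor verification you flag are exactly the two points the paper's proof handles, in the same way.
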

\begin{proof}
We identify $\zp$ with $\mathcal Z_{\sK_P}$ and equip the latter
with a complex analytic structure of the quotient
$U(\sK_P)/C_{\Psi,\Sigma_P}$ using Theorem~\ref{zkcomplex},
provided that $m+n$ is even. If $m+n$ is odd, we add one redundant
inequality to~\eqref{ptope} (the simplest one is $1\ge0$, with
$\mb a_i=0$ and $b_i=1$); then $m$ increases by one, $n$ does not
change, and $\zp$ multiplies by a circle.
\end{proof}

The existence of complex structures on intersections of quadrics
similar to~\eqref{zpqua} was established in~\cite{bo-me06} as a
consequence of the construction of LVM-manifolds in~\cite{lo-ve97}
and~\cite{meer00}. Namely, in~\cite{bo-me06} there were considered
transverse intersections of \emph{homogeneous} quadrics in~$\C^m$
with a unit sphere:
\begin{equation}\label{bmlink}
  \mathcal L:=\left\{\begin{array}{ll}
  \mb z\in\C^m\colon&\sum_{k=1}^m d_{jk}|z_k|^2=0,\quad d_{jk}\in\R,\; 1\le j\le m-n-1,\\[2mm]
  &\sum_{k=1}^m|z_k|^2=1.
  \end{array}\right\}
\end{equation}
The transversality condition (guaranteeing that~\eqref{bmlink} is
a smooth manifold of dimension~$m+n$) translates via the
\emph{Gale transform} to the condition that the quotient of
$\mathcal L$ by the standard action of $\T^m$ is an
$n$-dimensional simple convex polytope
(see~\cite[Lemma~0.12]{bo-me06}; compare Theorem~\ref{smgen}
above). Such $\mathcal L$ were called \emph{links}
in~\cite{bo-me06}.

The underlying smooth manifold $\mathcal N$ of an \emph{LVM
manifold} is obtained by dropping the last equation
in~\eqref{bmlink} and considering the intersection of the
remaining $m-n-1$ homogeneous quadrics in the projective space $\C
P^{m-1}$. The complex structure comes from identifying this
intersection of quadrics with the orbit space of a free action of
$\C^\ell$ on an open subset in~$\C P^{m-1}$. By the construction,
$\mathcal N$ is the quotient of $\mathcal L$ by a free action of a
circle. On the other hand, according to~\cite[Th.~12.2]{bo-me06},
every link~$\mathcal L$ admits a complex structure as an
LVM-manifold if its dimension $m+n$ is even; otherwise $\mathcal
L\times S^1$ admits such a structure.

To relate~\eqref{zpqua} to~\eqref{bmlink} we note the following.
Assume for simplicity that all redundant inequalities
in~\eqref{zpqua} are of the form $1\ge0$. The rows of the
coefficient matrix $\varGamma=(\gamma_{jk})$ of~\eqref{zpqua} form
a basis in the space of linear relations between the vectors $\mb
a_1,\ldots,\mb a_m$ in~\eqref{ptope}. Since these vectors are
determined only up to a positive multiple, we may assume that
$|\mb a_i|=1$ if the $i$th inequality is irredundant. Since $P$ is
a convex polytope, one of the relations between the $\mb a_i$'s is
$\sum_{i=1}^m(\mathop{\mathrm{vol}}F_i)\mb a_i=\mathbf 0$, where
$\mathop{\mathrm{vol}}F_i\ge0$ is the volume of the facet~$F_i$.
By rescaling the vectors $\mb a_i$ again we may achieve that
$\sum_{i=1}^m\mb a_i=\mathbf 0$ (this still leaves one scaling
parameter free), and choose the coefficients of this relation as
the last row of~$\varGamma$, that is, $\gamma_{m-n,k}=1$ for $1\le
k\le m$. Then the last of the quadrics in~\eqref{zpqua} takes the
form $\sum_{k=1}^m|z_k|^2=\sum_{k=1}^mb_k$. Summing up all $m$
inequalities of~\eqref{ptope}, using the fact that
$\sum_{i=1}^m\mb a_i=\mathbf 0$ and noting that at least one of
the inequalities is strict for some~$\mb x\in\R^n$, we obtain
$\sum_{k=1}^mb_k>0$. Using up the last free scaling parameter we
achieve that $\sum_{k=1}^mb_k=1$. Then the last quadric
in~\eqref{zpqua} becomes $\sum_{k=1}^m|z_k|^2=1$, which coincides
with the last equation in~\eqref{bmlink}. Subtracting this
equation with an appropriate coefficient from the other equations
in~\eqref{zpqua} we finally transform~\eqref{zpqua}
to~\eqref{bmlink}.

Since the intersection of quadrics~\eqref{zpqua} can be identified
with a link~\eqref{bmlink}, the above cited
result~\cite[Th.~12.2]{bo-me06} can be used to equip $\zp$ with a
complex structure. However, the method of
Corollary~\ref{zpcomplex} is somewhat more direct; it does not
require the passage to projectivisations and LVM-manifolds.

\section{Holomorphic bundles over toric varieties
and Hodge numbers}\label{sectoric}
A \emph{toric variety} is a
normal algebraic variety $X$ on which an algebraic torus
$(\C^\times)^n$ acts with a dense orbit. As is well known in
algebraic geometry, toric varieties are classified by rational
fans~\cite{dani78}. Under this correspondence, complete fans give
rise to complete varieties (compact in the usual topology), normal
fans of polytopes to projective varieties, regular fans to
nonsingular varieties, and simplicial fans to varieties with mild
(orbifold-type) singularities.

A construction due to several authors, which is now often referred
to as the `Cox construction'~\cite{cox95}, identifies a toric
variety $X_\Sigma$ corresponding to a rational simplicial fan
$\Sigma$ in $\R^n$ with the (geometric) quotient of
$U(\sK_\Sigma)$ by the action of a certain $(m-n)$-dimensional
algebraic subgroup $G_\Sigma$ in~$(\C^\times )^m$. (General toric
varieties also arise as \emph{categorical} quotients, but we shall
not need this here.) The Cox construction may be also regarded as
an algebraic version of the construction of Hamiltonian toric
manifolds via \emph{symplectic reduction}~\cite{guil94}.

In the case of rational simplicial polytopal fans $\Sigma_P$ a
construction of~\cite{me-ve04} identifies the corresponding
projective toric variety $X_P$ as the base of a holomorphic
principal \emph{Seifert fibration}, whose total space is the
moment-angle manifold~$\zp$ equipped with a complex structure of
an LVM-manifold, and fibre is a compact complex torus of complex
dimension $\ell=\frac{m-n}2$. (Seifert fibrations are
generalisations of holomorphic fibre bundles to the case when the
base is an orbifold.) If $X_P$ is a nonsingular projective toric
variety, then there is a holomorphic free action of a complex
$\ell$-dimensional torus $T^{2\ell}_\C$ on $\zp$ with
quotient~$X_P$.

Here we generalise the construction of~\cite{me-ve04} to rational
complete simplicial fans (not necessarily polytopal). By an
application of the Borel spectral sequence to the resulting
holomorphic principal fibre bundle $\zk\to X_\Sigma$ we derive
some information about the Hodge numbers of complex structures on
moment-angle manifolds.

In this section we assume that the fan $\Sigma$ is complete,
simplicial and rational. We choose as $\mb a_1,\ldots,\mb a_m$ the
primitive integral generators of the 1-dimensional cones.

\begin{construction}[`Cox construction']
Let $\Lambda_\C\colon\C^m\to\C^n$ be the complexification of
map~\eqref{lambdar}, and consider its complex exponential:
\[
\begin{aligned}
  \exp\Lambda_\C\colon(\C^\times)^m&\to(\C^\times)^n,\\
  (z_1,\ldots,z_m)&\mapsto
  \Bigr(\prod_{i=1}^mz_i^{a_{i1}},\ldots,\:
  \prod_{i=1}^mz_i^{a_{in}}\Bigl)
\end{aligned}
\]
(here we use that the fan is rational and the vectors $\mb
a_i=(a_{i1},\ldots,a_{in})$ are integral; otherwise the map above
is not defined). Set $G_\Sigma:=\Ker(\exp\Lambda_\C)$. This is an
$(m-n)$-dimensional algebraic subgroup in $(\C^\times)^m$, hence,
it is isomorphic to a product of $(\C^\times)^{m-n}$ and a finite
group, where the finite group is trivial if the fan is regular.
The group $G_\Sigma$ acts almost freely (with finite isotropy
subgroups) on the open set $U(\sK_\Sigma)$; moreover, this action
is free if $\Sigma$ is a regular fan. This is proved in the same
way as the freeness of the action of $R_\Sigma$ on $U(\sK_\Sigma)$
in Theorem~\ref{zksmooth}~(a).

The \emph{toric variety} associated with the fan $\Sigma$ is the
quotient $X_\Sigma:=U(\sK_\Sigma)/G_\Sigma$. It is a complex
algebraic variety of dimension~$n$. The variety $X_\Sigma$ is
nonsingular whenever $\Sigma$ is regular; otherwise it has
orbifold-type singularities (i.e. $X_\Sigma$ is locally isomorphic
to a quotient of $\C^n$ by a finite group). The quotient algebraic
torus $(\C^\times)^m/G_\Sigma\cong(\C^\times)^n$ acts on
$X_\Sigma$ with a dense orbit.

The variety $X_\Sigma$ is projective if and only if $\Sigma$ is
the normal fan of a convex polytope $P$; in this case we shall
denote the variety by~$X_P$.
\end{construction}

Assume that $m-n$ is even by adding an empty 1-cone to $\Sigma$ if
necessary, and set $\ell:=\frac{m-n}2$. We observe that for any
choice of $\Psi$ in Construction~\ref{psi} the subgroup
$C_{\Psi,\Sigma}$ lies in $G_\Sigma$ as an $\ell$-dimensional
complex subgroup. This follows from the fact that, since
$\Lambda_\C$ is the complexification of a real map, condition~(b)
of Construction~\ref{psi} is equivalent to $\Lambda_\C\Psi=0$,
which implies that $\exp\Lambda_\C(C_{\Psi,\Sigma})$ is trivial.

\begin{proposition}\label{toricfib}\
\begin{itemize}
\item[(a)]The toric variety $X_\Sigma$ is identified, as a
topological space, with the quotient of $\mathcal Z_{\sK_\Sigma}$
by the holomorphic action of $G_\Sigma/C_{\Psi,\Sigma}$.

\item[(b)]If the fan $\Sigma$ is regular, then $X_\Sigma$ is the base of a
holomorphic principal bundle with total space $\mathcal
Z_{\sK_\Sigma}$ and fibre the complex torus
$G_\Sigma/C_{\Psi,\Sigma}$ of dimension~$\ell$.
\end{itemize}
\end{proposition}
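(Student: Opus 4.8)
The plan is to deduce both statements from the factorisation of the $G_\Sigma$-action on $U(\sK)$ through the subgroup $C_{\Psi,\Sigma}$. The key structural fact, already noted in the excerpt, is that $C_{\Psi,\Sigma}$ is an $\ell$-dimensional complex subgroup of $G_\Sigma$; since $\dim_\C G_\Sigma=m-n=2\ell$, the quotient $G_\Sigma/C_{\Psi,\Sigma}$ has complex dimension~$\ell$. For part~(a) I would first observe that $\mathcal Z_{\sK_\Sigma}=U(\sK)/C_{\Psi,\Sigma}$ by Theorem~\ref{zkcomplex}~(b), and that $X_\Sigma=U(\sK)/G_\Sigma$ by the Cox construction. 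Because $C_{\Psi,\Sigma}\subset G_\Sigma$ is a (closed, complex) subgroup, the quotient by $G_\Sigma$ can be performed in two stages, giving the homeomorphism
\[
  X_\Sigma=U(\sK)/G_\Sigma\cong\bigl(U(\sK)/C_{\Psi,\Sigma}\bigr)\big/\bigl(G_\Sigma/C_{\Psi,\Sigma}\bigr)
  =\mathcal Z_{\sK_\Sigma}\big/\bigl(G_\Sigma/C_{\Psi,\Sigma}\bigr),
\]
with the residual action of $G_\Sigma/C_{\Psi,\Sigma}$ holomorphic because $C_{\Psi,\Sigma}$ acts by holomorphic transformations and the complex structure on $\mathcal Z_{\sK_\Sigma}$ is precisely the quotient structure from Theorem~\ref{zkcomplex}.

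For part~(b) I would specialise to a regular fan, where the Cox construction gives a \emph{free} $G_\Sigma$-action, so the residual $G_\Sigma/C_{\Psi,\Sigma}$-action on $\mathcal Z_{\sK_\Sigma}$ is also free. The group $G_\Sigma/C_{\Psi,\Sigma}$ is a quotient of $G_\Sigma\cong(\C^\times)^{m-n}$ by the complex subgroup $C_{\Psi,\Sigma}\cong\C^\ell$; I would identify it with a compact complex torus of dimension~$\ell$ exactly as in Example~\ref{2torus}, where the quotient $(\C^\times)^{m-n}/C_{\Psi,\Sigma}$ was shown to be $T^{2\ell}_\C$. Granting freeness and properness of this torus action, the quotient map $\mathcal Z_{\sK_\Sigma}\to X_\Sigma$ is a holomorphic principal bundle with this torus as structure group and fibre, which is the assertion.

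The main obstacle I anticipate is verifying that $\mathcal Z_{\sK_\Sigma}\to X_\Sigma$ is genuinely a locally trivial principal \emph{bundle} rather than merely a free-quotient map, i.e.\ producing local holomorphic trivialisations. The cleanest route is to build them from the chart data already constructed in the proof of Theorem~\ref{zkcomplex}: over each open set $(\C,\C^\times)^I/C_{\Psi,\Sigma}$ one has an explicit description as the total space of a holomorphic $\C^n$-bundle over the torus $T^{2\ell}_\C=(\C^\times)^{m-n}/C_{\Psi,\Sigma}$, and projecting to the $X_\Sigma$-chart exhibits the torus fibre directly. Local triviality then follows because $G_\Sigma$ acts properly (finite, in fact trivial, isotropy for regular $\Sigma$) on $U(\sK)$, so the principal $G_\Sigma$-bundle $U(\sK)\to X_\Sigma$ is locally trivial, and restricting the structure group to the subquotient fibration over the same base preserves local triviality. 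I would simply note that properness and freeness of the $G_\Sigma/C_{\Psi,\Sigma}$-action follow from those of the $C_{\Psi,\Sigma}$- and $G_\Sigma$-actions established earlier, so no genuinely new analytic estimate is required, and the proof reduces to assembling the pieces.
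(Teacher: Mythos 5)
Your proposal is correct and takes essentially the same route as the paper: part~(a) is exactly the two-stage quotient $X_\Sigma=U(\sK_\Sigma)/G_\Sigma\cong\bigl(U(\sK_\Sigma)/C_{\Psi,\Sigma}\bigr)\big/(G_\Sigma/C_{\Psi,\Sigma})$ via Theorem~\ref{zkcomplex}, and part~(b) likewise uses freeness of the $G_\Sigma$-action for regular $\Sigma$ together with the identification of $G_\Sigma/C_{\Psi,\Sigma}$ as a compact complex $\ell$-torus via Example~\ref{2torus}. The paper simply invokes the standard fact that a free holomorphic action of a compact torus yields a principal bundle, whereas you spell out local triviality through the charts of Theorem~\ref{zkcomplex}; this is additional detail, not a different argument.
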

\begin{proof}
To prove~(a) we just observe that
\[
  X_\Sigma=U(\sK_\Sigma)/G_\Sigma=
  \bigl(U(\sK_\Sigma)/C_{\Psi,\Sigma}\bigr)\big/(G_\Sigma/C_{\Psi,\Sigma})\cong
  \mathcal Z_{\sK_\Sigma}\big/(G_\Sigma/C_{\Psi,\Sigma}),
\]
where we used Theorem~\ref{zkcomplex}. If $\Sigma$ is regular,
then $G_\Sigma\cong(\C^\times)^{m-n}$ and
$G_\Sigma/C_{\Psi,\Sigma}$ is a compact complex $\ell$-torus by
Example~\ref{2torus}. The action of $G_\Sigma$ on $U(\sK_\Sigma)$
is holomorphic and free in this case, and the same is true for the
action of $G_\Sigma/C_{\Psi,\Sigma}$ on $\mathcal Z_{\sK_\Sigma}$.
A holomorphic free actions of the torus $G_\Sigma/C_{\Psi,\Sigma}$
gives rise to a principal bundle, which finishes the proof of~(b).
\end{proof}

\begin{remark}
Like in the projective situation of~\cite{me-ve04}, for singular
varieties $X_\Sigma$ the quotient projection $\mathcal
Z_{\sK_\Sigma}\to X_\Sigma$ of Proposition~\ref{toricfib}~(a) is a
holomorphic principal \emph{Seifert bundle} for an appropriate
orbifold structure on~$X_\Sigma$. 
\end{remark}

Let $M$ be a complex $n$-dimensional manifold. The space
$\Omega_\C^*(M)$ of complex differential forms on $M$ decomposes
into a direct sum of the subspaces of \emph{$(p,q)$-forms},
$\Omega_\C^*(M)=\bigoplus_{0\le p,q\le n}\Omega^{p,q}(M)$, and
there is the \emph{Dolbeault differential}
$\bar\partial\colon\Omega^{p,q}(M)\to\Omega^{p,q+1}(M)$.
 The
dimensions $h^{p,q}(M)$ of the Dolbeault cohomology groups
$H_{\bar\partial}^{p,q}(M)$ are known as the \emph{Hodge numbers}
of~$M$. They are important invariants of the complex structure
of~$M$.


%
%

The Dolbeault cohomology of a compact complex $\ell$-dimensional
torus $T_\C^{2\ell}$ is isomorphic to an exterior algebra on
$2\ell$ generators:
\begin{equation}\label{dolbtorus}
  H_{\bar\partial}^{*,*}(T_\C^{2\ell})\cong
  \Lambda[\xi_1,\ldots,\xi_\ell,\eta_1,\ldots,\eta_\ell],
\end{equation}
where $\xi_1,\ldots,\xi_\ell\in
H_{\bar\partial}^{1,0}(T_\C^{2\ell})$ are the classes of basis
holomorphic 1-forms, and $\eta_1,\ldots,\eta_\ell\in
H_{\bar\partial}^{0,1}(T_\C^{2\ell})$ are the classes of basis
antiholomorphic 1-forms. In particular, the Hodge numbers are
given by $h^{p,q}(T_\C^{2\ell})=\binom\ell p\binom\ell q$.

The de Rham cohomology of a complete nonsingular toric variety
$X_\Sigma$ admits a Hodge decomposition with nonzero terms in
bidegrees $(p,p)$ only~{\cite[\S12]{dani78}}. This together with
the cohomology calculation due to
Danilov--Jurkiewicz~\cite[\S10]{dani78} gives the following
description of the Dolbeault cohomology:
\begin{equation}\label{dolbtoric}
  H_{\bar\partial}^{*,*}(X_\Sigma)\cong
  \C[v_1,\ldots,v_m]/(\mathcal I_{\sK_\Sigma}+\mathcal J_{\Sigma}),
\end{equation}
where $v_i\in H_{\bar\partial}^{1,1}(X_\Sigma)$, the ideal
$\mathcal I_{\sK_\Sigma}$ is generated by monomials $v_{i_1}\cdots
v_{i_k}$ for which $\mb a_{i_1},\ldots,\mb a_{i_k}$ do not span a
cone of $\Sigma$ (the \emph{Stanley--Reisner ideal}), and
$\mathcal J_{\Sigma}$ is generated by the linear combinations
$\sum_{k=1}^ma_{kj}v_k$ for $1\le j\le n$, where $a_{kj}$ denotes
the $j$th coordinate of~$\mb a_k$. We have
$h^{p,p}(X_\Sigma)=h_p$, where $(h_0,h_1,\ldots,h_n)$ is the
\emph{$h$-vector} of $\sK_\Sigma$~\cite[\S2.1]{bu-pa02}, and
$h^{p,q}(X_\Sigma)=0$ for $p\ne q$.

\setcounter{theorem}3

\begin{theorem}\label{dolbzp}
Let $\Sigma$ be a complete rational nonsingular fan in~$\R^n$,
with $m$ one-dimensional cones (some of which may be empty), and
$m+n$ even. Let $\zk=\mathcal Z_{\sK_\Sigma}$ be the moment-angle
manifold with the complex structure defined by a choice of
subgroup $C_{\Psi,\Sigma}$, see~\eqref{csigma}. Then the Dolbeault
cohomology algebra $H_{\bar\partial}^{*,*}(\zk)$ is isomorphic to
the cohomology of differential bigraded
algebra
\begin{equation}\label{zkmult}
\bigl[\Lambda[\xi_1,\ldots,\xi_\ell,\eta_1,\ldots,\eta_\ell]\otimes
  H_{\bar\partial}^{*,*}(X_\Sigma),d\bigr]
\end{equation}
whose bigrading is defined by~\eqref{dolbtorus}
and~\eqref{dolbtoric}, and differential $d$ of bidegree $(0,1)$ is
defined on the generators as
\[
  dv_i=d\eta_j=0,\quad d\xi_j=c(\xi_j),\quad
  1\le i\le m,\;1\le j\le\ell,
\]
where $c\colon H^{1,0}_{\bar\partial}(T_\C^{2\ell})\to
H^2(X_\Sigma, \C)=  H_{\bar\partial}^{1,1}(X_\Sigma)$ is the first
Chern class map of the torus principal bundle $\zk\to X_\Sigma$.
\end{theorem}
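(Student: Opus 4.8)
The plan is to apply the Borel spectral sequence to the holomorphic principal bundle $\zk\to X_\Sigma$ established in Proposition~\ref{toricfib}~(b), whose fibre is the compact complex $\ell$-torus $T_\C^{2\ell}=G_\Sigma/C_{\Psi,\Sigma}$. The Borel spectral sequence is the Dolbeault analogue of the Leray--Serre spectral sequence for a holomorphic fibre bundle with compact complex fibre: its $E_2$-page has the form $E_2^{p,q}\cong H_{\bar\partial}^{*,*}(X_\Sigma)\otimes H_{\bar\partial}^{*,*}(T_\C^{2\ell})$, using that the base carries only $(p,p)$-classes so that the relevant local system is trivial, and it converges to $H_{\bar\partial}^{*,*}(\zk)$. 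First I would recall the statement of the Borel spectral sequence and verify its hypotheses for our bundle, then identify the $E_2$-term with the underlying bigraded algebra of~\eqref{zkmult} using~\eqref{dolbtorus} and~\eqref{dolbtoric}.

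The heart of the argument is the identification of the first nontrivial differential with the map $d$ prescribed in the theorem. The antiholomorphic generators $\eta_j\in H^{0,1}_{\bar\partial}(T_\C^{2\ell})$ and the base classes $v_i\in H^{1,1}_{\bar\partial}(X_\Sigma)$ should be permanent cocycles, while each holomorphic fibre generator $\xi_j\in H^{1,0}_{\bar\partial}(T_\C^{2\ell})$ transgresses. The key point is that for a holomorphic principal torus bundle the transgression of the holomorphic 1-forms on the fibre is exactly the first Chern class $c(\xi_j)\in H^{1,1}_{\bar\partial}(X_\Sigma)=H^2(X_\Sigma,\C)$ of the bundle. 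Here I would use that $\xi_j$ has bidegree $(1,0)$, so its image under a differential of bidegree $(0,1)$ lands in bidegree $(1,1)$ of the base; by the standard description of the Borel transgression this image is $c(\xi_j)$, where $c$ is induced by the extension class of the bundle. Since the total Chern class map is a morphism of the relevant structures, it suffices to specify $d$ on algebra generators and extend as a derivation, which is precisely~\eqref{zkmult}.

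The final step is to argue that the spectral sequence degenerates after this one differential, so that $H_{\bar\partial}^{*,*}(\zk)$ is computed as the cohomology of~\eqref{zkmult}. The reason is a bidegree (or Hodge-weight) obstruction: all base generators $v_i$ sit in bidegree $(1,1)$ and all fibre generators in bidegree $(1,0)$ or $(0,1)$, so the only algebra generators that can support a nonzero differential of bidegree $(0,1)$ are the $\xi_j$, and once these have been differentiated there is no room for higher differentials to act nontrivially on the remaining generators. Thus the $E_3$-page already equals $E_\infty$, and the multiplicative structure of the Borel spectral sequence identifies $H_{\bar\partial}^{*,*}(\zk)$ with the cohomology algebra of~\eqref{zkmult}.

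I expect the main obstacle to be the precise identification of the transgression with the Chern class map $c$, including checking that the differential is indeed a derivation of bidegree $(0,1)$ and that no other generators transgress. This requires care with the conventions of the Borel spectral sequence (as in Hirzebruch's treatment) and with the Dolbeault bigrading on both fibre and base; in particular, one must use the fact from~\eqref{dolbtoric} that $X_\Sigma$ has no odd-degree cohomology and only $(p,p)$-type classes, which is what forces the $\eta_j$ and $v_i$ to be permanent cocycles and rules out higher differentials. The verification that the total space of our bundle indeed satisfies the hypotheses guaranteeing that $E_2$ splits as the stated tensor product is the technical crux; everything else is formal once the transgression is pinned down.
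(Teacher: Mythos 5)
Your proposal takes a genuinely different route from the paper. The paper never runs the Borel spectral sequence in the proof: it takes the minimal Dolbeault model $(B,\delta)=\mathcal M(X_\Sigma)$, invokes \cite[Cor.~4.66]{f-o-t08}, which says that for a holomorphic principal $T_\C^{2\ell}$-bundle over a \emph{strictly formal} base the algebra $\Lambda[\xi_1,\ldots,\xi_\ell,\eta_1,\ldots,\eta_\ell]\otimes B$ with $d\xi_j=c(\xi_j)$, $d\eta_j=0$ is a Dolbeault model of the total space, checks strict formality of $X_\Sigma$ via the $\partial\bar\partial$-lemma and \cite[Th.~8]{ne-ta78}, and finally replaces $B$ by $H^{*,*}_{\bar\partial}(X_\Sigma)$ along the formality quasi-isomorphism using \cite[Lemma~14.2]{f-h-t01}. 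In the paper the collapse of the Borel spectral sequence at $E_3$ is a \emph{corollary} of Theorem~\ref{dolbzp}; you are running that implication in the opposite direction, which is legitimate in principle but is exactly where your argument has a gap. Even granting the identification of $d_2$ with the Chern class map (this part is standard, cf.~\cite{bore},~\cite{hoef93}) and granting degeneration, a multiplicative spectral sequence computes only the bigraded algebra associated to the Borel filtration: $E_3=E_\infty$ yields $\mathop{\mathrm{gr}}H^{*,*}_{\bar\partial}(\zk)\cong H\bigl[\eqref{zkmult}\bigr]$, not an algebra isomorphism onto $H^{*,*}_{\bar\partial}(\zk)$ itself. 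The theorem asserts the latter, so you must still resolve the (above all multiplicative) extension problems; this is precisely what the quasi-isomorphism of models in the paper's proof is designed to avoid. Your argument does recover the additive structure, i.e.\ the Hodge numbers, since we work over~$\C$, but that is weaker than the stated theorem.

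A second, smaller, problem is that your degeneration argument is too loose. After $d_2$ the page $E_3$ is \emph{not} generated as an algebra by the classes of the $v_i$ and $\eta_j$: one has $E_3\cong K\otimes\Lambda[\eta_1,\ldots,\eta_\ell]$, where $K=H\bigl(\Lambda[\xi_1,\ldots,\xi_\ell]\otimes H^{*,*}_{\bar\partial}(X_\Sigma),d_2\bigr)$ is a Koszul-type cohomology, and $K$ contains new generators of positive fibre degree represented by $\xi$-monomials times base classes whenever $c(\xi_1),\ldots,c(\xi_\ell)$ fails to be a regular sequence (already for the Hopf manifold one has the generator $[\xi t^n]$; ghost vertices make the failure worse, cf.~Lemma~\ref{cbounds}, and fibre degree $\ge2$ generators occur e.g.\ for a product of two Hopf manifolds). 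So "no room for higher differentials on the remaining generators" needs the full four-graded bookkeeping of~\cite{bore}: all $E_r$ are concentrated in even base degrees since $H^{\mathrm{odd}}(X_\Sigma)=0$, which kills $d_r$ for odd $r$; for even $r\ge4$ the target of $d_r$ on a pure-$\xi$ Koszul generator would have negative antiholomorphic fibre degree, and $d_r\eta_j=0$ for fibre-degree reasons; then the Leibniz rule finishes. This can all be carried out, but it is not the one-line bidegree remark in your proposal, and even once completed it only establishes the collapse, leaving the extension issue of the previous paragraph as the essential missing step.
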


\begin{proof}
Let $(B,\delta)=\mathcal{M}(X_\Sigma)$ be the minimal Dolbeault
model~\cite{f-o-t08} of~$X_\Sigma$. Consider the differential
algebra
\begin{align*}
  &\bigl[\Lambda[\xi_1,\ldots,\xi_\ell,\eta_1,\ldots,\eta_\ell]\otimes
  B, d\bigr]\qquad\text{with}\\
  &d|_B=\delta,\quad  d(\xi_i)=c(\xi_i)\in
  B^{1,1}\cong H^{1,1}_{\bar\partial}(X_\Sigma),\quad d(\eta_i)=0.
\end{align*}
By \cite[Cor.~4.66]{f-o-t08}, it gives a model for the Dolbeault
cohomology ring of the total space $\zk$ of the principal
$T^{2\ell}_\C$-bundle $\zk\to X_\Sigma$, provided that $X_\Sigma$
is strictly formal in the sense of~\cite[Def.~4.58]{f-o-t08}.

According to~\cite[Th.~8]{ne-ta78}, a compact manifold satisfying
the $\partial\bar\partial$-lemma is strictly formal, in particular
any complete toric variety is strictly formal. The ordinary
formality implies the existence of a quasi-isomorphism
$\phi_B\colon B\to H_{\bar\partial}^{*,*}(X_\Sigma)$, which
extends to a quasi-isomorphism
\[
  \mbox{id}\otimes\phi_B\colon
  \bigl[\Lambda[\xi_1,\ldots,\xi_\ell,\eta_1,\ldots,\eta_\ell]
  \otimes B, d\bigr]\to
  \bigl[\Lambda[\xi_1,\ldots,\xi_\ell,\eta_1,\ldots,\eta_\ell]
  \otimes H_{\bar\partial}^{*,*}(X_\Sigma), d\bigr]
\]
by~\cite[Lemma~14.2]{f-h-t01}. Thus, the differential algebra
$\bigl[\Lambda[\xi_1,\ldots,\xi_\ell,\eta_1,\ldots,\eta_\ell]\otimes
H^{*,*}_{\bar\partial}(X_\Sigma), d\bigr]$ provides a model for
the Dolbeault cohomology of $\zk$, as claimed.
\end{proof}

The map $c$ can be described explicitly in terms of the matrix
$\Psi$ defining the complex structure on $\zk$. We shall also need
the real $(m-n)\times m$ matrix $\varGamma=(\gamma_{jk})$ such
that $(\gamma_{j1},\ldots,\gamma_{jm})$, $1\le j\le m-n$, is any
basis in the space of linear relations between the generators $\mb
a_1,\ldots,\mb a_m$ of the one-dimensional cones of~$\Sigma$.
(This matrix was already considered in Section~4 for the polytopal
case.)

\begin{lemma}\label{mumatrix}
We have
\[
  c(\xi_j)=\mu_{j1}v_1+\ldots+\mu_{jm}v_m,\quad 1\le j\le \ell,
\]
where $M=(\mu_{jk})$ is an $\ell\times m$ matrix satisfying the
two conditions:
\begin{itemize}
\item[(a)] $\varGamma M^t\colon\C^\ell\to\C^{2\ell}$ is a
monomorphism;

\item[(b)] $M\Psi=0$.
\end{itemize}
\end{lemma}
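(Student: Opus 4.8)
The plan is to compute the first Chern class map $c$ directly from the principal $T^{2\ell}_\C$-bundle structure $\zk\to X_\Sigma$, and then to translate the two defining properties of the monomorphism $\Psi$ (Construction~\ref{psi}) into the stated conditions on the matrix $M$. Recall from Proposition~\ref{toricfib} that the fibre is the complex torus $T^{2\ell}_\C=G_\Sigma/C_{\Psi,\Sigma}$, where $G_\Sigma\cong(\C^\times)^{m-n}$. The classes $\xi_1,\ldots,\xi_\ell\in H^{1,0}_{\bar\partial}(T^{2\ell}_\C)$ are the holomorphic $1$-forms on the fibre; under the transgression in the bundle, each $\xi_j$ maps to a class $c(\xi_j)\in H^{1,1}_{\bar\partial}(X_\Sigma)$, which by~\eqref{dolbtoric} is a $\C$-linear combination $\sum_k\mu_{jk}v_k$ of the generators $v_k$. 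Writing these coefficients as an $\ell\times m$ matrix $M=(\mu_{jk})$ is then just a matter of bookkeeping, and the substance of the lemma is identifying which matrices $M$ arise.

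First I would make the bundle explicit at the level of one-parameter subgroups. The fibre torus is the quotient of the Lie algebra $\Ker\Lambda_\C\cong\C^{m-n}$ by the lattice $\Ker\Lambda_\C\cap 2\pi i\,\Z^m$, with $C_{\Psi,\Sigma}=\exp\Psi(\C^\ell)$ spanning the holomorphic directions. A holomorphic $1$-form $\xi_j$ on the fibre is thus a linear functional on $\Ker\Lambda_\C$ vanishing on the antiholomorphic part. The Chern class $c(\xi_j)$ is computed by choosing a connection on the bundle and taking the $(1,1)$-part of its curvature; equivalently, by the standard description of principal torus bundles over a toric variety, the transgression sends the character lattice of the fibre torus to the divisor classes $v_k$ via the restriction of the coordinate characters of $(\C^\times)^m$. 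Concretely, the condition $M\Psi=0$ expresses that $c(\xi_j)$ annihilates the holomorphic fibre directions (so that $\xi_j$ genuinely transgresses rather than restricting to a fibre form), while $\varGamma M^t$ being a monomorphism encodes that the $\ell$ transgressed classes are linearly independent modulo the relations $\mathcal J_\Sigma$, i.e.\ that the bundle is nontrivial in all $\ell$ fibre directions—exactly condition~(a) of Construction~\ref{psi} phrased on the cohomology side.

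The technical core, and the step I expect to be the \textbf{main obstacle}, is matching two different presentations of the cohomology of the fibre and the base so that the transgression becomes a clean matrix identity. On the base, $H^{1,1}_{\bar\partial}(X_\Sigma)$ is $\C^m$ modulo the $n$ linear relations $\sum_k a_{kj}v_k$ from $\mathcal J_\Sigma$, so a class is recorded by a vector in $\C^m$ only up to the row space of $\Lambda_\C$; dually, the space of relations is the row space of $\varGamma$. On the fibre, $H^{1,0}_{\bar\partial}(T^{2\ell}_\C)$ is dual to the holomorphic part of $\Ker\Lambda_\C$, which is precisely the column space of $\Psi$. I would set up the transgression as the composite $H^{1,0}(\text{fibre})\to H^{1,1}(X_\Sigma)$ dual to the inclusion $C_{\Psi,\Sigma}\hookrightarrow G_\Sigma\hookrightarrow(\C^\times)^m$, and then check that the matrix $M$ realizing it is characterized by $M\Psi=0$ together with the independence of its image under $\varGamma$. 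The delicate point is keeping the holomorphic/antiholomorphic splitting, the real structure coming from $\Re$, and the lattice relations $\varGamma$ all consistent, since $\Psi$ is genuinely complex while $\varGamma$ and $\Lambda_\R$ are real; verifying that $M\Psi=0$ (and not $M\overline\Psi=0$) is the correct holomorphicity condition is where the argument must be handled carefully.

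Finally, once the matrix $M$ is pinned down, conditions~(a) and~(b) follow by dualizing the two conditions in Construction~\ref{psi}. Condition~(b) there, $\Lambda_\R\circ\Re\circ\Psi=0$, is equivalent to $\Lambda_\C\Psi=0$, which places $\Psi$ into $\Ker\Lambda_\C$ and forces the transgression matrix to satisfy $M\Psi=0$; condition~(a), that $\Re\circ\Psi$ is a monomorphism, guarantees that the $\ell$ fibre directions are linearly independent in $\Ker\Lambda_\R\otimes\C$, which after applying $\varGamma$ (whose rows span the relations) yields that $\varGamma M^t$ is injective. I would present this as a short duality computation rather than an explicit basis chase, invoking the exactness of the sequence defined by $\Lambda_\R$ and $\varGamma$ to convert the two geometric conditions into the two algebraic ones.
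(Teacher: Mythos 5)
Your outline follows the same route as the paper's proof --- identify the holomorphic $1$-forms on the fibre torus with certain linear functionals, lift them to $\C^m$, and translate conditions (a) and (b) --- but the two steps that carry the actual content of the lemma are not carried out correctly. The first is the holomorphicity condition, which you explicitly postpone (``verifying that $M\Psi=0$ and not $M\overline{\Psi}=0$ \ldots is where the argument must be handled carefully'') and on which your text is internally inconsistent: you call $\Psi(\C^\ell)$ ``the holomorphic directions'', then say a holomorphic $1$-form vanishes ``on the antiholomorphic part'', and then say $M\Psi=0$ means annihilating ``the holomorphic fibre directions''; these three statements cannot all hold at once, and no argument in the proposal decides among them. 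The paper settles this point with no holomorphic/antiholomorphic splitting at all: $T^{2\ell}_\C=G_\Sigma/C_{\Psi,\Sigma}=(\Ker\exp\Lambda_\C)/\exp\Psi(\C^\ell)$ is the quotient of a \emph{complex} Lie group by a \emph{complex} subgroup, so holomorphic differentials on it are exactly the $\C$-linear functionals on $\mathrm{Lie}\,G_\Sigma=\Ker\Lambda_\C$ that vanish on $\Psi(\C^\ell)$; lifting such a functional to $\C^m$ (the ambiguity of the lift being precisely $\Lambda^t_\C(\C^n)$, the functionals vanishing on all of $\Ker\Lambda_\C$) turns this into $M\Psi=0$, and $\overline\Psi$ never appears. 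Note also that the fibre is not ``$\Ker\Lambda_\C$ modulo the lattice $\Ker\Lambda_\C\cap 2\pi i\,\Z^m$'' --- that quotient is $G_\Sigma$ itself --- but the further quotient by $\Psi(\C^\ell)$.

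The second gap is your reading of condition (a). It is false that injectivity of $\varGamma M^t$ ``encodes that the $\ell$ transgressed classes are linearly independent'' or ``that the bundle is nontrivial in all $\ell$ fibre directions'': whenever $\sK$ has ghost vertices this interpretation breaks down --- for the compact complex torus ($m=k=2\ell$, $X_\Sigma$ a point) the bundle is trivial and $c=0$, and in general Lemma~\ref{cbounds} allows $\Ker c$ to have dimension up to $[k/2]$ --- yet condition (a) holds for the correct $M$ in all these cases. Since the rows of $\varGamma$ span $\Ker\Lambda_\R$, one has $\Ker(\varGamma\colon\C^m\to\C^{2\ell})=\Lambda^t_\C(\C^n)$, so (a) says that the rows of $M$ are linearly independent \emph{modulo} $\Lambda^t_\C(\C^n)$, i.e.\ that they represent a basis of $H^{1,0}_{\bar\partial}(T_\C^{2\ell})$ inside $H^1(T_\C^{2\ell};\C)=\C^m/\bigl(\Lambda^t_\C(\C^n)\bigr)$. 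This is a statement about the fibre, made \emph{before} composing with the ghost-forgetting projection that computes $c$, and it asserts nothing about nontriviality of the bundle. For the same reason, your closing derivation --- that injectivity of $\Re\circ\Psi$ ``after applying $\varGamma$'' yields injectivity of $\varGamma M^t$ --- is a non sequitur relating two different matrices. What is actually needed is a dimension count: the annihilator of $\Psi(\C^\ell)$ in $(\C^m)^*$ has dimension $m-\ell$ (here injectivity of $\Psi$, guaranteed by Construction~\ref{psi}(a), enters); it contains $\Lambda^t_\C(\C^n)$, whose dimension is $n$ because $\Sigma$ is complete and $\Lambda_\C\Psi=0$; hence the quotient has dimension $m-\ell-n=\ell$, and $\ell$ rows satisfying (b) represent a basis of $H^{1,0}_{\bar\partial}(T_\C^{2\ell})$ if and only if (a) holds.
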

\begin{proof}
Let $\Lambda^t_\C\colon\C^n\to\C^m$ be the transpose of the
complexified map~\eqref{lambdar}. Then have
$H^1(T_\C^{2\ell};\C)=\C^m/(\Lambda^t_\C(\C^n))$ and
$H^2(X_\Sigma;\C)=\C^{m-k}/(\Lambda^t_\C(\C^n))$, where $k$ is the
number of ghost vertices in~$\sK$. Map $c$ is given by
the projection
\[
  p\colon \C^m\big/
  \bigl(\Lambda^t_\C(\C^n)\bigr)
  \longrightarrow \C^{m-k}/\bigl(\Lambda^t_\C(\C^n)\bigr)
\]
which forgets the coordinates in $\C^m$ corresponding to the ghost
vertices. We therefore need to identify the subspace of
holomorphic differentials
$H^{1,0}_{\bar\partial}(T_\C^{2\ell})\cong\C^\ell$ inside the
space of all 1-forms $H^1(T_\C^{2\ell};\C)\cong\C^{2\ell}$.
Since
\[
  T_\C^{2\ell}=G_\Sigma/C_{\Psi,\Sigma}=(\Ker\exp\Lambda_\C)/\exp\Psi(\C^\ell),
\]
holomorphic differentials on $T_\C^{2\ell}$ correspond to linear
functions on $\Ker\Lambda_\C$ which are $\Psi(\C^\ell)$-invariant.
Every such linear function is a restriction of a
$\Psi(\C^\ell)$-invariant linear function on $\C^m$ to
$\Ker\Lambda_\C\subset\C^m$. The kernel of this restriction
consists of those functions which are
$\Lambda^t_\C(\C^n)$-invariant. Condition~(b) says exactly that
the rows of $M$ are $\Psi(\C^\ell)$-invariant linear functions.
Condition~(a) says that the rows of $M$ constitute a basis in the
complement to the subspace of $\Lambda^t_\C(\C^n)$-invariant
functions in the space of $\Psi(\C^\ell)$-invariant functions.
\end{proof}


It is interesting to compare Theorem~\ref{dolbzp} with the
following description of the de Rham cohomology of~$\zk$.

\setcounter{theorem}6

\begin{theorem}[{\cite[Th.~7.36]{bu-pa02}}]\label{cohomzpred}
Let $\zk$ be as in Theorem~\ref{dolbzp}. Then its de Rham
cohomology algebra is isomorphic to the cohomology of the
differential graded algebra
\[
  \bigl[\Lambda[u_1,\ldots,u_{m-n}]\otimes
  H^*(X_\Sigma),d\bigr],
\]
with $\deg u_j=1$, $\deg v_i=2$, and differential $d$ defined on
the generators as
\[
  dv_i=0,\quad du_j=\gamma_{j1}v_1+\ldots+\gamma_{jm}v_m,\quad
  1\le j\le m-n.
\]
\end{theorem}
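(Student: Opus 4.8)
The plan is to transport the proof of Theorem~\ref{dolbzp} from the Dolbeault setting into the de Rham one, using the same principal torus bundle. On forgetting the complex structure, Proposition~\ref{toricfib}(b) presents $\zk$ as the total space of a smooth principal $\T^{m-n}$-bundle $\zk\to X_\Sigma$, whose fibre is the underlying real torus of the complex torus $G_\Sigma/C_{\Psi,\Sigma}$ (of real dimension $2\ell=m-n$). Equivalently, and with no reference to a choice of $\Psi$, one has $\zk=U(\sK)/R_\Sigma$ and $X_\Sigma=U(\sK)/G_\Sigma$, so that $\zk\to X_\Sigma$ is the quotient by the compact subtorus $\T^{m-n}=G_\Sigma\cap\T^m$ of the residual $\T^m$-action; this already shows that, unlike the Dolbeault cohomology, the de Rham cohomology of $\zk$ does not depend on the complex structure, as it must.

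The next step is to control the transgression of this bundle. In the Serre spectral sequence the degree-one generators of the fibre $\T^{m-n}$ transgress to $H^2(X_\Sigma)$, and I would check two facts. First, each $v_k=c_1(L_k)$, the first Chern class of the line bundle $L_k$ over $X_\Sigma$ associated to the $k$th coordinate of the Cox quotient via the restricted coordinate character, lies in the image of the transgression $\tau\colon H^1(\T^{m-n})\to H^2(X_\Sigma)$; since by~\eqref{dolbtoric} the $v_k$ generate $H^2(X_\Sigma)$ and $\dim H^1(\T^{m-n})=m-n=\dim H^2(X_\Sigma)$, the map $\tau$ is an isomorphism. Second, the rows of $\varGamma$ from~\eqref{pexse}, being by definition the linear relations $\sum_k\gamma_{jk}\mb a_k=\mathbf 0$, are orthogonal to the $n$ vectors $(a_{1l},\ldots,a_{ml})$ whose associated linear forms cut $H^2(X_\Sigma)$ out of $\langle v_1,\ldots,v_m\rangle$ in~\eqref{dolbtoric} (no further degree-two relations arise, as the Stanley--Reisner ideal starts in cohomological degree four); by complementary dimension the $m-n$ classes $\gamma_{j1}v_1+\ldots+\gamma_{jm}v_m$ therefore form a basis of $H^2(X_\Sigma)$. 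Choosing the fibre generators $u_1,\ldots,u_{m-n}$ so that $\tau(u_j)=\gamma_{j1}v_1+\ldots+\gamma_{jm}v_m$ then reproduces the prescribed differential. This identification is the real counterpart of Lemma~\ref{mumatrix}.

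Finally I would feed these data into the standard Sullivan model of a principal torus bundle (a Koszul--Sullivan extension of $A_{PL}(X_\Sigma)$). For any such bundle there is a model $\bigl[A_{PL}(X_\Sigma)\otimes\Lambda[u_1,\ldots,u_{m-n}],d\bigr]$ in which $d$ restricts to the base differential on $A_{PL}(X_\Sigma)$ and $du_j$ is a cocycle representing $\tau(u_j)$. Since $\Sigma$ is complete, rational and nonsingular, $X_\Sigma$ is a smooth complete toric variety and hence a formal space: it satisfies the $\partial\bar\partial$-lemma, so is even strictly formal by~\cite[Th.~8]{ne-ta78}, as already used before Theorem~\ref{dolbzp}. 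Formality supplies a quasi-isomorphism $A_{PL}(X_\Sigma)\to H^*(X_\Sigma)$, which by~\cite[Lemma~14.2]{f-h-t01} extends to a quasi-isomorphism of the two bundle models; this lets me replace $A_{PL}(X_\Sigma)$ by $H^*(X_\Sigma)$ and $du_j$ by the honest class $\gamma_{j1}v_1+\ldots+\gamma_{jm}v_m$, yielding exactly the differential graded algebra in the statement.

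The main obstacle is this last step. A priori the Serre spectral sequence only records how $u_j$ transgresses, and one must rule out higher differentials and extension ambiguities before identifying $H^*(\zk)$ with the cohomology of a differential graded algebra over $H^*(X_\Sigma)$. Formality of $X_\Sigma$ is exactly what collapses the problem onto the degree-two transgression and removes these corrections, playing here the role that strict formality plays in the derivation of~\eqref{zkmult} in Theorem~\ref{dolbzp}.
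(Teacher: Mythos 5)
Your overall strategy---present $\zk\to X_\Sigma$ as a smooth principal $\T^{m-n}$-bundle, take the Koszul--Sullivan model of this bundle over $A_{PL}(X_\Sigma)$, and use formality of $X_\Sigma$ together with~\cite[Lemma~14.2]{f-h-t01} to replace $A_{PL}(X_\Sigma)$ by $H^*(X_\Sigma)$---is sound, and it mirrors in the de Rham setting the paper's proof of Theorem~\ref{dolbzp}. Note that the paper itself gives no proof of Theorem~\ref{cohomzpred}: the statement is quoted from \cite[Th.~7.36]{bu-pa02}, where it is deduced algebraically from the general Koszul-complex (Tor-algebra) description of $H^*(\zk)$ of \cite[Th.~7.7]{bu-pa02}; the Leray--Serre picture is only mentioned as an interpretation. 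So your route is genuinely different from the cited one and closer in spirit to the paper's Dolbeault argument. (A minor point: formality supplies a zigzag of quasi-isomorphisms through a minimal model, not a direct map $A_{PL}(X_\Sigma)\to H^*(X_\Sigma)$; as in the paper's proof of Theorem~\ref{dolbzp}, one should route the comparison through the minimal model and apply \cite[Lemma~14.2]{f-h-t01} on each side.)

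There is, however, a genuine gap in your identification of the transgression. You claim that $\tau\colon H^1(\T^{m-n})\to H^2(X_\Sigma)$ is an isomorphism because $\dim H^1(\T^{m-n})=m-n=\dim H^2(X_\Sigma)$, and that the classes $\gamma_{j1}v_1+\ldots+\gamma_{jm}v_m$ form a basis of $H^2(X_\Sigma)$ because the Stanley--Reisner ideal has no degree-two generators. Both assertions fail as soon as $\sK$ has ghost vertices, which the hypotheses of Theorem~\ref{dolbzp} explicitly allow (``some of which may be empty'') and which are forced whenever an empty $1$-cone must be added to make $m+n$ even; the Hopf manifold of Example~\ref{hopf} is exactly such a case. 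For a ghost vertex $i$ the monomial $v_i$ itself is a degree-two generator of $\mathcal I_{\sK}$, so $\dim H^2(X_\Sigma)=m-n-k$, where $k$ is the number of ghost vertices; hence $\tau$ has a $k$-dimensional kernel and the $m-n$ classes $\gamma_{j1}v_1+\ldots+\gamma_{jm}v_m$ merely span $H^2(X_\Sigma)$ with $k$-fold redundancy, so your dimension count does not produce generators $u_j$ with $\tau(u_j)=\gamma_{j1}v_1+\ldots+\gamma_{jm}v_m$. The required identification is still true, but it must be proved directly: the structure torus is $T(N)$ with $N=\Z^m\cap\Ker\Lambda_\R$, the transgression is the composite $H^1(T(N))\cong H^2(BT(N))\to H^2(X_\Sigma)$, and comparing the classifying map of $\zk\to X_\Sigma$ with the Borel fibration $\zk\to \zk\times_{\T^m}E\T^m\to B\T^m$ shows that $\tau(\phi)=\sum_{k}\widetilde\phi(\mb e_k)v_k$ for any extension $\widetilde\phi\in(\R^m)^*$ of $\phi\in(N\otimes\R)^*$. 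Taking $\phi_j$ to be the restriction to $N\otimes\R=\Ker\Lambda_\R$ of $\langle\gamma_j,\cdot\rangle$ (these form a basis, since the rows of $\varGamma$ span $\Ker\Lambda_\R$) yields $\tau(\phi_j)=\gamma_{j1}v_1+\ldots+\gamma_{jm}v_m$ uniformly, ghost vertices included: their $v_i$ simply vanish in $H^*(X_\Sigma)$, matching the split circle factors of $\zk$. This is precisely the ``real counterpart of Lemma~\ref{mumatrix}'' that you invoke but never carry out; without it your argument breaks on the paper's basic examples.
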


This follows from the more general result~\cite[Th.~7.7]{bu-pa02}
describing the cohomology of~$\zk$. Like Theorem~\ref{dolbzp}, the
theorem above may be interpreted as a collapse result for a
spectral sequence, this time the Leray--Serre spectral sequence of
the principal $T^{m-n}$-bundle $\zk\to X_\Sigma$. For more
information about the ordinary cohomology of $\zk$
see~\cite[Ch.~7]{bu-pa02} and~\cite[\S4]{pano08}. There is also a
bigrading in the ordinary cohomology of $\zk$, which is different
from the bigrading in the Dolbeault cohomology. These two
bigradings may be merged in the Dolbeault cohomology, providing a
four-graded structure.

The are two classical spectral sequences for the Dolbeault
cohomology. First, the \emph{Borel spectral sequence}~\cite{bore}
of a holomorphic fibre bundle $E\to B$ with a compact K\"ahler
fibre~$F$, whose $E_2$ term is $H_{\bar\partial}(B)\otimes
H_{\bar\partial}(F)$ and which converges to $H_{\bar\partial}(E)$.
Second, the \emph{Fr\"olicher spectral
sequence}~\cite[\S3.5]{gr-ha78}, whose $E_1$ term is the Dolbeault
cohomology of a complex manifold $M$ and which converges to the de
Rham cohomology of~$M$. Theorems~\ref{dolbzp} and~\ref{cohomzpred}
can be interpreted as collapse results for these spectral
sequences:

\begin{corollary}\
\begin{itemize}
\item[(a)]
The Borel spectral sequence of the holomorphic principal bundle
$\zk\to X_\Sigma$ collapses at the $E_3$ term, i.e.
$E_3=E_\infty$;

\item[(b)]
the Fr\"olicher spectral sequence of $\zk$ collapses at the $E_2$
term: $E_2=E_\infty$.
\end{itemize}
\end{corollary}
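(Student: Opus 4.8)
The plan is to read off both collapse statements from Theorems~\ref{dolbzp} and~\ref{cohomzpred} by matching the models appearing there with the relevant pages of the two spectral sequences.

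For part~(a) I would first note that the fibre $T_\C^{2\ell}=G_\Sigma/C_{\Psi,\Sigma}$ of the principal bundle $\zk\to X_\Sigma$ (Proposition~\ref{toricfib}) is a compact complex torus, hence K\"ahler, so the Borel spectral sequence applies and has
\[
  E_2\cong H_{\bar\partial}^{*,*}(X_\Sigma)\otimes H_{\bar\partial}^{*,*}(T_\C^{2\ell})
  =H_{\bar\partial}^{*,*}(X_\Sigma)\otimes\Lambda[\xi_1,\ldots,\xi_\ell,\eta_1,\ldots,\eta_\ell],
\]
converging to $H_{\bar\partial}^{*,*}(\zk)$. The first differential $d_2$ is a derivation which vanishes on the base classes $v_i$ and which, on the degree-one fibre classes, is the transgression. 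Since $\eta_j$ has bidegree $(0,1)$ it would transgress into $H^{0,2}_{\bar\partial}(X_\Sigma)$, which is zero because a complete nonsingular toric variety has Dolbeault cohomology concentrated in bidegrees $(p,p)$; hence $d_2\eta_j=0$. The class $\xi_j$ of bidegree $(1,0)$ transgresses to the Chern class $c(\xi_j)\in H^{1,1}_{\bar\partial}(X_\Sigma)$ of the bundle. Thus $(E_2,d_2)$ is exactly the differential bigraded algebra~\eqref{zkmult}, so $E_3=H(E_2,d_2)\cong H_{\bar\partial}^{*,*}(\zk)$ by Theorem~\ref{dolbzp}. As the sequence converges to $H_{\bar\partial}^{*,*}(\zk)$, we have $\dim_\C E_3=\dim_\C E_\infty$, and since the dimensions of the pages are non-increasing all higher differentials vanish; therefore $E_3=E_\infty$.

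For part~(b) I would realise $H_{dR}(\zk;\C)$ through a bigraded refinement of the model of Theorem~\ref{cohomzpred}. Giving $v_i$ bidegree $(1,1)$ and splitting the $m-n=2\ell$ real generators into holomorphic generators $\xi_j$ of bidegree $(1,0)$ and antiholomorphic generators $\eta_j=\bar\xi_j$ of bidegree $(0,1)$, the de Rham differential decomposes as $\partial+\bar\partial$ on
\[
  C:=\Lambda[\xi_1,\ldots,\xi_\ell,\eta_1,\ldots,\eta_\ell]\otimes H_{\bar\partial}^{*,*}(X_\Sigma),
\]
with $\bar\partial\xi_j=c(\xi_j)$, $\partial\xi_j=0$, $\partial\eta_j=\overline{c(\xi_j)}$, $\bar\partial\eta_j=0$ and $\partial v_i=\bar\partial v_i=0$; here $\partial\eta_j=\overline{d\xi_j}=\overline{c(\xi_j)}$ is forced because $d\eta_j$ lands in the $(1,1)$-part of $H^{*,*}_{\bar\partial}(X_\Sigma)$. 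This is a genuine bicomplex whose total cohomology is $H_{dR}(\zk;\C)$, agreeing with Theorem~\ref{cohomzpred} after the change of basis relating the $u_j$ to the $\xi_j,\eta_j$, and the Fr\"olicher spectral sequence of $\zk$ is its column-filtration spectral sequence. Its $E_1$-page recovers $H_{\bar\partial}^{*,*}(\zk)$ as in Theorem~\ref{dolbzp}, and $d_1$ is induced by $\partial$, i.e.\ by $\eta_j\mapsto\overline{c(\xi_j)}$.

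It then remains to prove $E_2=E_\infty$. Since $\bar\partial$ involves only the $\xi_j$ and $\partial$ only the $\eta_j$, the page $E_1$ is the Koszul cohomology of $H_{\bar\partial}^{*,*}(X_\Sigma)$ with respect to the forms $c(\xi_1),\ldots,c(\xi_\ell)$ tensored with $\Lambda[\eta_1,\ldots,\eta_\ell]$, and $E_2$ is the further Koszul cohomology with respect to $\overline{c(\xi_1)},\ldots,\overline{c(\xi_\ell)}$, while the full bicomplex is the Koszul complex of $H_{\bar\partial}^{*,*}(X_\Sigma)$ on the $2\ell$ linear forms $c(\xi_j),\overline{c(\xi_j)}$. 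The plan is to compute $\dim_\C E_2$ from this iterated Koszul description and to compare it with $\dim_\C H_{dR}(\zk;\C)$ given by Theorem~\ref{cohomzpred}; equality forces every $d_r$ with $r\ge2$ to vanish. This comparison is the main obstacle: for a general bicomplex the higher Fr\"olicher differentials need not vanish (there are complex nilmanifolds whose sequence degenerates only at $E_3$ or later), so one must genuinely exploit the structure of $H_{\bar\partial}^{*,*}(X_\Sigma)=\C[v_1,\ldots,v_m]/(\mathcal I_{\sK_\Sigma}+\mathcal J_\Sigma)$ as the cohomology of a complete toric variety, concentrated in even bidegrees, to control the two transgression sequences simultaneously.
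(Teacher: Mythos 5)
Your argument for part~(a) is correct and is essentially the paper's own: one identifies the differential bigraded algebra~\eqref{zkmult} with the $E_2$ term of the Borel spectral sequence (the $\eta_j$ transgress trivially because $H^{0,2}_{\bar\partial}(X_\Sigma)=0$, while the $\xi_j$ transgress to the Chern classes $c(\xi_j)$), so that Theorem~\ref{dolbzp} gives $E_3\cong H^{*,*}_{\bar\partial}(\zk)$, and convergence together with monotonicity of dimensions forces $E_3=E_\infty$.

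Part~(b), however, contains a genuine gap, which you yourself flag: your plan reduces the collapse to showing that the total dimension of your $E_2$ (the iterated Koszul cohomology of the bicomplex $C$) equals $\dim_\C H_{dR}(\zk;\C)$, and you then declare this comparison to be ``the main obstacle''. But that equality of dimensions \emph{is} the statement $E_2=E_\infty$; deferring it means the proof has not been given. There is also an unproven claim already in your setup: you assert that the Fr\"olicher spectral sequence of $\zk$ coincides, from $E_1$ onwards, with the column-filtration spectral sequence of the algebraic bicomplex $C$ with $\partial\eta_j=\overline{c(\xi_j)}$. Theorems~\ref{dolbzp} and~\ref{cohomzpred} provide quasi-isomorphisms at the level of $\bar\partial$-cohomology and of total de Rham cohomology respectively, but neither gives a \emph{filtered} quasi-isomorphism of bicomplexes, so this identification itself needs an argument of the strict-formality type used in the proof of Theorem~\ref{dolbzp}; conjugation also does not act on the abstract model in any obvious way, so ``$\partial\eta_j=\overline{d\xi_j}$ is forced'' is not a proof.

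The paper's own argument for (b) is different and avoids both difficulties. It works directly with the Fr\"olicher spectral sequence of $\zk$: the $E_1$ term is $H^{*,*}_{\bar\partial}(\zk)$, known from Theorem~\ref{dolbzp}, and comparison with the de Rham cohomology of Theorem~\ref{cohomzpred} shows that the classes $\eta_1,\ldots,\eta_\ell\in E_1^{0,1}$ cannot survive to $E_\infty$. The only possible nontrivial differential on these elements is $d_1\colon E_1^{0,1}\to E_1^{1,1}$, which pins down $d_1$; Theorem~\ref{cohomzpred} then identifies the cohomology of $[E_1,d_1]$, i.e.\ the $E_2$ term, with $H_{dR}(\zk)$ itself, since the de Rham model $\Lambda[u_1,\ldots,u_{m-n}]\otimes H^*(X_\Sigma)$ is precisely $[E_1,d_1]$ computed in two stages, first over the $\xi_j$ and then over the $\eta_j$. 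As the spectral sequence converges to $H_{dR}(\zk)$, the equality $\dim E_2=\dim H_{dR}(\zk)=\dim E_\infty$ kills all higher differentials. This is exactly the input your Koszul computation is missing: rather than trying to control the iterated Koszul cohomology abstractly (where, as you rightly note, nilmanifold-type examples show collapse can fail), one should use Theorem~\ref{cohomzpred} to evaluate the target dimension.
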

\begin{proof}
To prove~(a) we just observe that the differential algebra given
by~\eqref{zkmult} is the $E_2$ term of the Dolbeault spectral
sequence, and its cohomology is the $E_3$ term.

Let us prove~(b). By comparing the Dolbeault and de Rham
cohomology algebras of $\zk$ given by Theorems~\ref{dolbzp}
and~\ref{cohomzpred} we observe that the elements
$\eta_1,\ldots,\eta_\ell\in E_1^{0,1}$ cannot survive in
the~$E_\infty$ term of the Fr\"olicher spectral sequence. The only
possible nontrivial differential on these elements is $d_1\colon
E_1^{0,1}\to E_1^{1,1}$. By Theorem~\ref{cohomzpred}, the
cohomology algebra of $[E_1,d_1]$ is exactly the de Rham
cohomology of~$\zk$.
\end{proof}

\begin{lemma}\label{cbounds}
Let $k$ be the number of ghost vertices in~$\sK$.
Then
\[
  k-\ell\le\dim_\C\Ker\bigl(c\colon H^{1,0}_{\bar\partial}(T_\C^{2\ell})\to
  H_{\bar\partial}^{1,1}(X_\Sigma)\bigr)\le\sbr k2.
\]
In particular, if $k\le1$ then the map $c$ is monic.
\end{lemma}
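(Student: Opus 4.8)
The plan is to analyze the kernel of the Chern class map $c$ using the explicit matrix description from Lemma~\ref{mumatrix}. Recall that $c(\xi_j)=\sum_k\mu_{jk}v_k$ where the $\ell\times m$ matrix $M=(\mu_{jk})$ satisfies $M\Psi=0$ (condition~(b)) and $\varGamma M^t$ is a monomorphism (condition~(a)). The relations $\mathcal J_\Sigma$ in $H^{*,*}_{\bar\partial}(X_\Sigma)$, see~\eqref{dolbtoric}, set $\sum_k a_{kj}v_k=0$, so in $H^{1,1}_{\bar\partial}(X_\Sigma)$ the classes $v_k$ satisfy exactly the linear relations coming from the rows of $\Lambda_\C^t$. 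Moreover, the $v_i$ corresponding to the $k$ ghost vertices vanish in $H^{1,1}_{\bar\partial}(X_\Sigma)$, since a ghost vertex $i$ means $\{i\}\notin\sK$, so $v_i\in\mathcal I_{\sK_\Sigma}$. Thus $c(\xi_j)=0$ in $H^{1,1}_{\bar\partial}(X_\Sigma)$ precisely when the vector $(\mu_{j1},\ldots,\mu_{jm})$, after deleting the $k$ ghost-vertex coordinates, lies in the row span of $\Lambda_\C^t$ restricted to the non-ghost coordinates.

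First I would set up the linear algebra cleanly. Let $\mu_j$ denote the $j$th row of $M$. An element $\sum_j \lambda_j\xi_j\in H^{1,0}_{\bar\partial}(T_\C^{2\ell})$ lies in $\Ker c$ if and only if $\sum_j\lambda_j\mu_j$ vanishes in $H^{1,1}_{\bar\partial}(X_\Sigma)$, i.e.\ if and only if $\sum_j\lambda_j\mu_j$ lies in the subspace $S\subset\C^m$ spanned by $\Lambda_\C^t(\C^n)$ together with the $k$ coordinate vectors $\mb e_i$ indexed by ghost vertices. Since ghost vertices correspond to zero columns of $\Lambda_\R$ (hence of $\Lambda_\C$), the vectors $\Lambda_\C^t(\C^n)$ already have zero entries in the ghost coordinates, so $S=\Lambda_\C^t(\C^n)\oplus\langle \mb e_i : i \text{ ghost}\rangle$ has dimension $n+k$. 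Therefore $\dim_\C\Ker c$ equals the dimension of the intersection of the row space of $M$ (which has dimension $\ell$ by condition~(a), since $\varGamma M^t$ monic forces $M$ to have full rank $\ell$) with this $(n+k)$-dimensional subspace $S$, computed inside $\C^m$.

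The upper bound I would obtain as follows. By condition~(b), $M\Psi=0$, so the row space of $M$ is orthogonal (in the natural pairing) to the column space of $\Psi$; combined with condition~(a), the rows of $M$ together with the rows of $\varGamma$ restricted appropriately interact with $\Ker\Lambda_\C$ and $\Psi(\C^\ell)$. The key point is that any element of $\Ker c$ corresponds to a holomorphic $1$-form on the torus $T_\C^{2\ell}=G_\Sigma/C_{\Psi,\Sigma}$ that extends to the total space, and the reality constraint—that $T_\C^{2\ell}$ arises from the \emph{real} subspace $\Ker\Lambda_\R\cong\R^{2\ell}$ equipped with a complex structure via $\Psi$—forces complex conjugation to pair holomorphic forms in $\Ker c$ with antiholomorphic forms, so genuinely trivial classes must come in conjugate pairs among the $2\ell$ real dimensions. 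Each such surviving class uses up at least two of the real generators, giving the bound $\dim_\C\Ker c\le\lfloor k/2\rfloor=\sbr k2$; the factor involving $k$ enters because only the $k$ ghost directions provide the extra classes in $S$ beyond the $n$ toric relations that are automatically killed anyway. For the lower bound $k-\ell\le\dim_\C\Ker c$, I would use the dimension count: $\dim(\text{row space of }M\cap S)\ge \dim(\text{row space}) + \dim S - m = \ell+(n+k)-m = \ell+n+k-(n+2\ell)=k-\ell$.

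The main obstacle will be the upper bound $\dim_\C\Ker c\le\sbr k2$, specifically making precise the conjugation argument. The content is that the complex structure on $T_\C^{2\ell}$ comes from a genuinely complex (not merely real) splitting, so $H^{1,0}$ and $H^{0,1}$ are exchanged by conjugation; a class $\sum\lambda_j\xi_j\in\Ker c$ lands in the image of the real subspace $N_\C:=(\Z^{\text{ghost}})\otimes\C\subset\C^m$ of ghost-coordinate directions, which is defined over $\R$ and hence conjugation-stable. I would argue that if such a holomorphic class lies in a conjugation-stable subspace of complex dimension $k$, then the holomorphic and antiholomorphic parts together span it, forcing the holomorphic part alone to have dimension at most $\lfloor k/2\rfloor$. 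Pinning down that the only new relations in $H^{1,1}_{\bar\partial}(X_\Sigma)$ beyond the $\mathcal J_\Sigma$-relations are exactly the $k$ ghost-vertex relations $v_i=0$, and that these span a conjugation-invariant real $k$-dimensional space, is the delicate step; the final claim that $c$ is monic for $k\le 1$ then follows since $\sbr k2=0$ when $k\le1$.
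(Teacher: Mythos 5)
Your setup and lower bound are sound: in bidegree $(1,1)$ the ideal $\mathcal I_{\sK_\Sigma}+\mathcal J_\Sigma$ is spanned exactly by the ghost classes $v_i$ and the linear forms $\sum_k a_{kj}v_k$ (this is immediate from the grading of the Stanley--Reisner ideal, so it is not the delicate point you single out), and your intersection estimate $\ell+(n+k)-m=k-\ell$ is the same rank--nullity count as in the paper. The genuine gap is in the upper bound. Your conjugation argument runs on the fact that $H^{1,0}_{\bar\partial}(T^{2\ell}_\C)$ --- the row space of $M$ modulo $\Lambda^t_\C(\C^n)$ --- meets its coordinatewise conjugate trivially inside $H^1(T^{2\ell}_\C;\C)\cong\C^m\big/\Lambda^t_\C(\C^n)$. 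You assert this (``$H^{1,0}$ and $H^{0,1}$ are exchanged by conjugation'') but never prove it, and it does \emph{not} follow from the two properties of $M$ you invoke: $M\Psi=0$ and $\varGamma M^t$ monic only say that the row space of $M$ is a complement of $\Lambda^t_\C(\C^n)$ inside the annihilator of $\Psi(\C^\ell)$, and an abstract subspace with these properties can contain real vectors (even a ghost coordinate vector $\mb e_i$, if one allows $\psi_i=0$), in which case $\dim_\C\Ker c$ could be as large as $\min(\ell,k)$ rather than $\sbr k2$. What rules this out is the reality condition on $\Psi$ itself from Construction~3.1: since $\Re\circ\Psi$ is monic, $\Psi(\C^\ell)\cap\R^m=0$, hence $\Psi(\C^\ell)\oplus\overline{\Psi(\C^\ell)}=\Ker\Lambda_\C$, hence, passing to annihilators, $\mathrm{Ann}\,\Psi(\C^\ell)\cap\mathrm{Ann}\,\overline{\Psi(\C^\ell)}=\Lambda^t_\C(\C^n)$ --- which is exactly the missing fact. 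This is the entire content of the paper's proof, phrased there differently: the real-part map $H^{1,0}_{\bar\partial}(T^{2\ell}_\C)\to H^1(T^{2\ell}_\C;\R)$ is an $\R$-linear isomorphism, so $\Ker c$ injects $\R$-linearly into the kernel of the projection $\R^{m-n}\to\R^{m-n-k}$, giving $\dim_\R\Ker c\le k$ and hence $\dim_\C\Ker c\le\sbr k2$.

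A second, smaller defect: your concluding mechanism, ``the holomorphic and antiholomorphic parts together span'' the conjugation-stable subspace, is false in general and is not what you need (for $k=1$ the ghost line is conjugation-stable but meets $H^{1,0}$ and $H^{0,1}$ only in $0$, so the parts span nothing). The correct conclusion is: $\Ker c$ and its conjugate $\overline{\Ker c}$ are subspaces of the $k$-dimensional conjugation-stable ghost space, of equal dimension and with trivial intersection (granted the fact above, since one lies in $H^{1,0}$ and the other in $H^{0,1}$), whence $2\dim_\C\Ker c\le k$. With the key fact supplied and this step repaired, your route becomes a valid proof, and is essentially the paper's argument rewritten through conjugation instead of real parts.
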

\begin{proof}
The map $c$ is given by the composite map in the top line of the
diagram
\[
\begin{CD}
  \C^\ell @>\Theta>> \C^m\big/\bigl(\Lambda^t_\C(\C^n)\bigr)
  @>p>> \C^{m-k}\big/\bigl(\Lambda^t_\C(\C^n)\bigr)\\
  @VV{\cong}V @VV\mathrm{Re}V @VV\mathrm{Re}V\\
  \R^{m-n}@=\R^{m-n} @>p'>> \R^{m-n-k}
\end{CD},
\]
where $\Theta$ is an inclusion of a complex subspace described in
the proof of Lemma~\ref{mumatrix} and $p$, $p'$ are the
projections forgetting the ghost vertices. The left vertical arrow
is an ($\R$-linear) isomorphism, as it has the form
$H^{1,0}_{\bar\partial}(T_\C^{2\ell})\to H^1(T_\C^{2\ell},\C)\to
H^1(T_\C^{2\ell},\R)$, and any real-valued function on the lattice
determining the torus $T_\C^{2\ell}$ is the real part of the
restriction of a $\C$-linear function to the same lattice.

Since the diagram above is commutative, the kernel of
$c=p\circ\Theta$ has the real dimension at most~$k$, which implies
the required upper bound on its complex dimension. For the lower
bound, we have $\dim_\C\Ker c\ge\dim_\C
H^{1,0}_{\bar\partial}(T_\C^{2\ell})-\dim_\C
H_{\bar\partial}^{1,1}(X_\Sigma) =\ell-(2\ell-k)=k-\ell$.
\end{proof}

%
\begin{theorem}\label{hodge}
Let $\zk$ be as in Theorem~\ref{dolbzp}, and let $k$ be the number
of ghost vertices in~$\sK$. Then the Hodge numbers
$h^{p,q}=h^{p,q}(\zk)$ satisfy
\begin{itemize}
\item[(a)] $\binom{k-\ell}p \le h^{p,0}\le\binom {[k/2]}p$ for $p\ge0$;
\item[(b)] $h^{0,q}=\binom\ell q$ for $q\ge0$;
\item[(c)] $h^{1,q}
  =(\ell-k)\binom\ell{q-1}+h^{1,0}\binom{\ell+1}q$ for $q\ge1$;
\item[(d)] $\frac{\ell(3\ell+1)}2-h_2(\sK)-\ell k+(\ell+1)h^{2,0}
 \le h^{2,1}\le\frac{\ell(3\ell+1)}2-\ell k+(\ell+1)h^{2,0}$.
\end{itemize}
\end{theorem}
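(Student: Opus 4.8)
The plan is to compute the cohomology of the differential bigraded algebra~\eqref{zkmult} from Theorem~\ref{dolbzp} bidegree by bidegree, since its cohomology in bidegree $(p,q)$ is exactly $h^{p,q}(\zk)$. The crucial observation is that $d$ vanishes on every $\eta_j$ and on all of $H^{*,*}_{\bar\partial}(X_\Sigma)$, acting nontrivially only on the $\xi_j$. Writing $B=\bigl[\Lambda[\xi_1,\ldots,\xi_\ell]\otimes H^{*,*}_{\bar\partial}(X_\Sigma),d\bigr]$, the full model is $\Lambda[\eta_1,\ldots,\eta_\ell]\otimes B$ with trivial differential on the exterior factor, so the Künneth formula gives an isomorphism of bigraded algebras $H(\text{model})\cong\Lambda[\eta_1,\ldots,\eta_\ell]\otimes H(B)$. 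As each $\eta_j$ has bidegree $(0,1)$, this yields the master formula
\[
  h^{p,q}(\zk)=\sum_{b\ge0}\binom{\ell}{b}\,\dim H^{p,q-b}(B),
\]
reducing everything to the bigraded cohomology of $B$.

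Counting bidegrees, $B^{p,q}=\Lambda^{p-q}[\xi]\otimes H^{q,q}_{\bar\partial}(X_\Sigma)$, so for fixed $p$ the differential forms a finite complex $B^{p,0}\to B^{p,1}\to\cdots\to B^{p,p}$, which is precisely the Koszul complex of the ring $H^{*,*}_{\bar\partial}(X_\Sigma)$ against the elements $c(\xi_1),\ldots,c(\xi_\ell)\in H^{1,1}_{\bar\partial}(X_\Sigma)$. The key algebraic step is the identification $H^{p,0}(B)=\ker\bigl(d\colon\Lambda^p[\xi]\to\Lambda^{p-1}[\xi]\otimes H^{1,1}_{\bar\partial}(X_\Sigma)\bigr)=\Lambda^p(\ker c)$. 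To prove it I would split $\langle\xi_1,\ldots,\xi_\ell\rangle=\ker c\oplus V'$: the $\ker c$ part is annihilated by $d$, while on $V'$ the map $c$ is injective, reducing the claim to the injectivity, for $a\ge1$, of the polarization map $\Lambda^aV'\to\Lambda^{a-1}V'\otimes V'$. This holds because its composite with the wedge product $\Lambda^{a-1}V'\otimes V'\to\Lambda^aV'$ equals $\pm a\cdot\mathrm{id}$, an isomorphism in characteristic zero. Hence $h^{p,0}(\zk)=\dim H^{p,0}(B)=\binom{\dim\ker c}{p}$, and since Lemma~\ref{cbounds} bounds $\dim\ker c$ between $k-\ell$ and $[k/2]$, monotonicity of $\binom{r}{p}$ in $r$ gives~(a); specializing $q=0$ in the master formula gives~(b), using $H^{0,0}(B)=\C$ and $H^{0,q'}(B)=0$ for $q'>0$.

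For~(c) I would compute the $p=1$ complex $\langle\xi\rangle\xrightarrow{c}H^{1,1}_{\bar\partial}(X_\Sigma)$ directly, obtaining $H^{1,0}(B)=\ker c$ and $H^{1,1}(B)=\mathrm{coker}\,c$. Their dimensions are $\dim\ker c$ and $h_1-\mathrm{rk}\,c=(2\ell-k)-(\ell-\dim\ker c)=\ell-k+\dim\ker c$, where $h_1=2\ell-k$ because $m-n=2\ell$ and $\sK$ has $k$ ghost vertices. Feeding these into $h^{1,q}=\binom{\ell}{q}\dim H^{1,0}(B)+\binom{\ell}{q-1}\dim H^{1,1}(B)$, invoking $h^{1,0}=\dim\ker c$, and applying Pascal's identity $\binom{\ell}{q}+\binom{\ell}{q-1}=\binom{\ell+1}{q}$, collapses the expression to the stated $(\ell-k)\binom{\ell}{q-1}+h^{1,0}\binom{\ell+1}{q}$.

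For~(d) I would run the same bookkeeping on the $p=2$ complex $B^{2,0}\xrightarrow{d_0}B^{2,1}\xrightarrow{d_1}B^{2,2}$, where $B^{2,1}=\langle\xi\rangle\otimes H^{1,1}_{\bar\partial}(X_\Sigma)$ and $d_1(\xi_j\otimes v)=c(\xi_j)\,v\in H^{2,2}_{\bar\partial}(X_\Sigma)$. Using $\dim\ker d_0=h^{2,0}$ together with $\dim B^{2,0}=\binom{\ell}{2}$, $\dim B^{2,1}=\ell(2\ell-k)$, and $h^{2,1}=\ell\,\dim H^{2,0}(B)+\dim H^{2,1}(B)$, a rank count yields after simplification the exact identity
\[
  h^{2,1}=\tfrac{\ell(3\ell+1)}2-\ell k+(\ell+1)h^{2,0}-\dim\mathrm{im}\,d_1.
\]
The two inequalities of~(d) are then exactly the trivial bounds $0\le\dim\mathrm{im}\,d_1\le\dim H^{2,2}_{\bar\partial}(X_\Sigma)=h_2$. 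I expect this final point to be the genuine obstacle: $\dim\mathrm{im}\,d_1=\dim\bigl((\mathrm{im}\,c)\cdot H^{1,1}_{\bar\partial}(X_\Sigma)\bigr)$ is governed by the multiplicative structure of $H^{*,*}_{\bar\partial}(X_\Sigma)$, which is not determined by $\sK$ and $\ell$ alone, so no exact value is available and one must settle for the range. Everywhere else the computation is exact, and the polarization injectivity of the second step is the only nontrivial input, being precisely what rules out spurious cocycles in top exterior degree.
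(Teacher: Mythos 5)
Your proposal is correct and follows essentially the same route as the paper's own proof: both compute the cohomology of the bigraded model of Theorem~\ref{dolbzp} in low bidegrees, identify $h^{p,0}$ as $\binom{\dim\Ker c}{p}$ and invoke Lemma~\ref{cbounds} for~(a), make the same cocycle/coboundary counts for~(b) and~(c), and obtain~(d) from the exact expression $h^{2,1}=\frac{\ell(3\ell+1)}2-\ell k+(\ell+1)h^{2,0}-\dim dV$ together with the trivial bounds $0\le\dim dV\le h_2(\sK)$ on the image of $d$ inside $H^{2,2}_{\bar\partial}(X_\Sigma)$. Your K\"unneth splitting off of $\Lambda[\eta_1,\ldots,\eta_\ell]$ and the resulting Koszul-complex bookkeeping merely reorganize the paper's direct count, with the one genuine addition that your polarization argument proves the identity $\Ker d^{p,0}=\Lambda^p(\Ker d^{1,0})$, which the paper asserts without proof.
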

\begin{proof}
Let $A^{p,q}$ denote the bidegree $(p,q)$ component of the
differential algebra from Theorem~\ref{dolbzp}, and let
$Z^{p,q}\subset A^{p,q}$ denote the subspace of $d$-cocycles. Then
$d^{1,0}\colon A^{1,0}\to Z^{1,1}$ coincides with the map~$c$, and
the required bounds for $h^{1,0}=\Ker d^{1,0}$ are already
established in Lemma~\ref{cbounds}. Since $h^{p,0}=\Ker d^{p,0}$
is the $p$th exterior power of the space $\Ker d^{1,0}$,
statement~(a) follows.

The differential is trivial on $A^{0,q}$, hence $h^{0,q}=\dim
A^{0,q}$, proving~(b).

The space $Z^{1,1}$ is spanned by the $v_i$ and $\xi_i\eta_j$ with
$\xi_i\in\Ker d^{1,0}$. Hence, $\dim Z^{1,1}=2\ell-k+h^{1,0}\ell$.
Also, $\dim d(A^{1,0})=\ell-h^{1,0}$, hence
$h^{1,1}=\ell-k+h^{1,0}(\ell+1)$. Similarly, $\dim
Z^{1,q}=(2\ell-k)\binom\ell{q-1}+h^{1,0}\binom\ell q$ (spanned by
the elements $v_i\eta_{j_1}\cdots\eta_{j_{q-1}}$ and
$\xi_i\eta_{j_1}\cdots\eta_{j_q}$ where $\xi_i\in\Ker d^{1,0}$,
$j_1<\ldots<j_q$), and $d\colon A^{1,q-1}\to Z^{1,q}$ hits a
subspace of dimension $(\ell-h^{1,0})\binom\ell{q-1}$. This
proves~(c).

We have $A^{2,1}=V\oplus W$, where $V$ is spanned by the monomials
$\xi_iv_j$ and $W$ by the monomials $\xi_i\xi_j\eta_k$.
Therefore,
\begin{equation}\label{h21}
  h^{2,1}=\dim V-\dim dV+\dim W-\dim dW-\dim dA^{2,0}.
\end{equation}
Now $\dim V=\ell(2\ell-k)$, $0\le\dim dV\le h_2(\sK)$ (since
$dV\subset H_{\bar\partial}^{2,2}(X_\Sigma))$, $\dim W-\dim
dW=\dim\Ker d|_W=\ell h^{2,0}$, and $\dim
dA^{2,0}=\binom\ell2-h^{2,0}$. Plugging these values
into~\eqref{h21} we obtain the inequalities of~(d).
\end{proof}

\begin{remark}
At most one ghost vertex is required to make $\dim\zk=m+n$ even.
Note that $k\le1$ implies $h^{p,0}(\zk)=0$, so that $\zk$ does not
have holomorphic forms of any degree in this case. If $\zk$ is a
torus, then $m=k=2\ell$, and $h^{1,0}(\zk)=h^{0,1}(\zk)=\ell$.
Otherwise Theorem~\ref{hodge} implies that
$h^{1,0}(\zk)<h^{0,1}(\zk)$, and therefore $\zk$ is not K\"ahler
(in the polytopal case this was observed in~\cite[Th.~3]{meer00}).
\end{remark}

\begin{example}Let $\zp\cong S^1\times S^{2n+1}$ be a Hopf manifold of
Example~\ref{hopf}. The corresponding fan is the normal fan
$\Sigma_P$ of the standard $n$-dimensional simplex $P$ with one
redundant inequality. We have $X_P=\C P^n$, and~\eqref{dolbtoric}
describes its cohomology as the quotient of
$\C[v_1,\ldots,v_{n+2}]$ by the two ideals: $\mathcal I$ generated
by $v_1$ and $v_2\cdots v_{n+2}$, and $\mathcal J$ generated by
$v_2-v_{n+2},\ldots,v_{n+1}-v_{n+2}$. The differential algebra of
Theorem~\ref{dolbzp} is therefore given by
$\bigl[\Lambda[\xi,\eta]\otimes\C[t]/t^{n+1},d\bigr]$, 
and it is straightforward to check that $dt=d\eta=0$ and $d\xi=t$
for a proper choice of~$t$. The nontrivial cohomology classes are
represented by the cocycles $1$, $\eta$, $\xi t^n$ and $\xi\eta
t^n$, which gives the following nonzero Hodge numbers of~$\zp$:
$h^{0,0}=h^{0,1}=h^{n+1,n}=h^{n+1,n+1}=1$.
\end{example}

\begin{example}[Calabi--Eckmann manifold]
Let $P=\Delta^p\times\Delta^q$ be the product of two standard
simplices with $p\le q$, so that $n=p+q$, $m=n+2$ and $\ell=1$.
The corresponding toric variety is $X_P=\C P^p\times \C P^q$ and
its cohomology ring is isomorphic to $\C[x,y]/(x^{p+1}, y^{q+1})$.
We may choose $\Psi=(1,\ldots,1,\alpha,\ldots,\alpha)^t$ in
Construction~\ref{psi}, where the number of units is $p+1$, the
number of $\alpha$'s is $q+1$ and $\alpha\notin\R$. This provides
$\zp\cong S^{2p+1}\times S^{2q+1}$ with a structure of a complex
manifold of dimension~$p+q+1$, known as a \emph{Calabi--Eckmann
manifold}; we denote complex manifolds obtained in this way by
$\mbox{\textit{C\!E}}(p,q)$. They are total spaces of holomorphic
principal bundles $\mbox{\textit{C\!E}}(p,q)\to \C P^p\times \C
P^q$ with fibre the complex torus~$\C/(\Z\oplus\alpha\Z)$.

Theorem~\ref{dolbzp} and Lemma~\ref{mumatrix} provide the
following description of the Dolbeault cohomology of
$\mbox{\textit{C\!E}}(p,q)$:
\[
  H^{*,*}_{\bar\partial}\bigl(\mbox{\textit{C\!E}}(p,q)\bigr)\cong
  H\bigl[\Lambda[\xi,\eta]\otimes\C[x,y]/(x^{p+1},y^{q+1}),d\bigr],
\]
where $dx=dy=d\eta=0$ and $d\xi=x-y$ for an appropriate choice of
$x,y$. We therefore obtain
\begin{equation}\label{dolbce}
  H^{*,*}_{\bar\partial}\bigl(\mbox{\textit{C\!E}}(p,q)\bigr)\cong
  \Lambda[\omega,\eta]\otimes\C[x]/(x^{p+1}),
\end{equation}
where $\omega\in
H^{q+1,q}_{\bar\partial}\bigl(\mbox{\textit{C\!E}}(p,q)\bigr)$ is
the cohomology class of the cocycle
$\xi\frac{x^{q+1}-y^{q+1}}{x-y}$. This calculation was done
in~\cite[\S9]{bore} using a slightly different argument. We note
that Dolbeault cohomology of a Calabi--Eckmann manifold depends
only on $p,q$ and does not depend on the complex
parameter~$\alpha$ (or matrix~$\Psi$).

\begin{example}
Now let $P=\Delta^1\times\Delta^1\times\Delta^2\times\Delta^2$.
Then $\zp$ has two structures of a product of Calabi--Eckmann
manifolds, namely,
$\mbox{\textit{C\!E}}(1,1)\times\mbox{\textit{C\!E}}(2,2)$ and
$\mbox{\textit{C\!E}}(1,2)\times\mbox{\textit{C\!E}}(1,2)$. Using
isomorphism~\eqref{dolbce} we observe that these two complex
manifolds have different Hodge numbers~$h^{2,1}$: it is 1 in the
first case, and 0 in the second. This shows that the choice of
matrix $\Psi$ affects not only the complex structure of~$\zp$, but
also its Hodge numbers, unlike the previous examples of complex
tori, Hopf and Calabi--Eckmann manifolds. Certainly it is not
highly surprising from the complex-analytic point of view.
\end{example}
\end{example}

\end{document}